\DeclareMathSymbol{\C}{\mathalpha}{AMSb}{"43}
\DeclareMathSymbol{\R}{\mathalpha}{AMSb}{"52}
\DeclareMathSymbol{\Z}{\mathalpha}{AMSb}{"5A}
\newtheorem{theorem}{Theorem}[section]
\newtheorem{definition}[theorem]{Definition}
\newtheorem{remark}[theorem]{Remark}
\newenvironment{proof}[1][Proof]{\begin{trivlist}
\item[\hskip \labelsep {\bfseries #1}]}{\end{trivlist}}
\newcommand{\qed}{\nobreak \ifvmode \relax \else
      \ifdim\lastskip<1.5em \hskip-\lastskip
      \hskip1.5em plus0em minus0.5em \fi \nobreak
      \vrule height0.75em width0.5em depth0.25em\fi}
\title{Inner product quadratures}
\author{Yu Chen \\
Courant Institute of Mathematical Sciences \\
New York University}
\date{Aug 26, 2011}
\begin{document}
\maketitle

\begin{abstract}
We introduce a $n$-term quadrature to integrate inner products of
$n$ functions, as opposed to a Gaussian quadrature to integrate 
$2n$ functions. We will characterize and provide computataional 
tools to construct the inner product quadrature, and establish 
its connection to the Gaussian quadrature.  
\end{abstract}

\tableofcontents

\section{The inner product quadrature}  \label{sec-1}

We consider three types of $n$-term Gaussian quadratures in this paper,
\begin{itemize}
 \item Type-1: to integrate $2n$ functions in interval $[a,b]$.
 \item Type-2: to integrate $n^2$ inner products of $n$ functions.
 \item Type-3: to integrate $n$ functions against $n$ weights.
\end{itemize}
For these quadratures, the weight functions $u$ are not required
positive definite. Type-1 is the classical Guassian quadrature, Type-2 
is the inner product quadrature, and Type-3 finds applications in
imaging and sensing, and discretization of integral equations. 

In this section we will introduce and characterize the Type-2, inner 
product quadrature. \S \ref{sec-2} presents algorithms for constructing 
the quadrature. \S \ref{sec-3} establishes framework to link the first 
two types, and introduce the Type-3 quadrature.  \S \ref{sec-4} illustrates 
our quadrature design method with several examples. \S \ref{sec-5} explores 
generalizations of our quadrature  methods to higher dimensions and 
examines applications to inverse scattering problems.

\subsection{Notation} \label{sec-1.1}

For $x \in [a,b]$ and $k \in [\alpha, \beta]$, by the usual abuse of
notation we will denote by $G(k,x)$ the four related objects
\begin{enumerate}
 \item A family of $L^2$ functions on $[a,b]$, with $k \in 
         [\alpha, \beta]$
 \item The linear subspace spanned by these functions;
 \item The kernel of an integral operator;
 \item The matrix of that operator of size $[\alpha, \beta]$-by-$[a,b]$. 
\end{enumerate}
When $k$ in $G(k,x)$ takes on some finite $n$ values $k_j$ in 
$[\alpha, \beta]$, the resulting set of $n$ functions are denoted by 
\begin{equation}\label{eqn3.10h}
  T(n,x) = \{\; G(k,x), \; x \in [0,1], \quad k = k_j, \; j=1:n \;\}
\end{equation}
Viewed as a matrix of size $n$-by-$[a,b]$, $T(n,x)$ has infinite number 
of columns, and its $n$ rows consist of the $n$ functions $T_j(x)$ in 
$L^2[a,b]$. For example, when $k$ ranges from 0 to $n-1$, the power 
functions
\begin{equation}\label{eqn3.10}
  G(k,x)=\{\; x^k, \quad x \in [0,1], \quad k \in [\alpha, \beta] \;\}
\end{equation}
becomes $T(n,x) = \Pi_n$, polynomials of degree less than $n$. 
Similarly, when $k$ takes on integers, the pure tones
\begin{equation}\label{eqn3.11}
  G(k,x)=\{\; \exp(ikx), \quad x \in [-\pi, \pi], 
                         \quad k \in [-\beta, \beta] \;\}
\end{equation}
becomes trigonometric polynomials. We will first consider a $n$-term 
quadrature to integrate the inner products in $T(n,x)$. 

For $c \in [a,b]$, the column of $T(n,x)$ taken at $x=c$ is the 
$n$-by-1 vector 
\begin{equation}\label{eqn2.2}
  T(n,c) = T(n,x)|_{x=c}
\end{equation}
Given a weight function $u$ we define the dot product in $T(n,x)$ 
\begin{equation}\label{eqn2.1}
  f \cdot g = \int_a^b u(x) f(x) \bar{g}(x) dx
\end{equation}
When $u$ is positive, (\ref{eqn2.1}) will be adopted as the inner 
product for $L^2[a,b]$. 

Given $n$ distinct points $x_j, \; j=1\!:\!n$ in $[a,b]$, let 
$T(n,\{x_j\})$ be the $n$-by-$n$ matrix formed by the $n$ 
columns $T(n,x_j)$, $j=1\!:\!n$. 

Let $B(n,n)$ be the $n$-by-$n$ Gramian matrix of the $n$ functions 
$T_j(x)$ so that
\begin{equation}\label{eqn2.3}
  B(n,n) = T(n,x) \cdot T(x,n)
\end{equation}
where $T(x,n)$ is the complex transpose of $T(n,x)$. Note that the dot 
sign requires inner product (\ref{eqn2.1}) with the underlying weight 
function $u$. For real valued $u$, $B$ is Hermitian.

For positive $u$, let $T^+(x,n)$ be the pseudo inverse of $T(n,x): 
L^2[a,b] \mapsto \C^n$ so that 
\begin{equation}\label{eqn2.4}
  T^+(x,n) = T(x,n) B^{-1}(n,n);
\end{equation}
thus $P_n: L^2[a,b] \mapsto L^2[a,b]$ defined by the formula 
\begin{equation}\label{eqn2.4a}
  P_n(x,y) = T^+(x,n) T(n,y)
\end{equation}
is the orthogonal projector onto $T(n,x)$

\subsection{A $n$-term quadrature for inner products} \label{sec-1.2}
For a positive $u$, let $Q(n,x)$ denote $n$ orthogonal basis functions 
for $T(n,x)$. 
\begin{definition}  \label{def-0.1}
A $n$-term quadrature $\{ x_j, w_j \}$ is one with distinct $x_j$ 
in $[a,b]$ and nonzero $w_j$, $j=1\!:\!n$. 
\end{definition}
\begin{theorem}  \label{thrm-0.1}
({\sc Duality of row and column orthogonalities}) Let the weight 
$u$ of (\ref{eqn2.1}) be positive. There is a $n$-term quadrature 
$\{ x_j, w_j \}$ to integrate all inner products in $T(n,x)$ if 
and only if the $n$ columns of the $n$-by-$n$ matrix $Q(n,\{x_j\})$ 
are orthogonal.
\end{theorem}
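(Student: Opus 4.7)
The plan is to rephrase the exactness condition as a single matrix identity and then exploit that $Q(n,\{x_j\})$ is a \emph{square} matrix. The first step is to replace the basis $T(n,x)$ by the orthonormal basis $Q(n,x)$: since $Q = AT$ for some invertible $n\times n$ change-of-basis $A$, the quadrature $\{x_j,w_j\}$ integrates every entry of the Gramian $B(n,n) = T(n,x)\cdot T(x,n)$ if and only if it integrates every $Q_i \cdot Q_j = \delta_{ij}$. Writing $M := Q(n,\{x_j\})$ and $W := \operatorname{diag}(w_1,\ldots,w_n)$, the theorem reduces to the single identity
\begin{equation*}
  M\, W\, M^{*} \;=\; I.
\end{equation*}

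The core of the proof is then a two-line duality for square matrices. From $MWM^{*} = I$, $M$ must be invertible, so $M^{-1} = WM^{*}$, and multiplying on the left by $M^{*}$ gives $M^{*}M = W^{-1}$, which is diagonal; this is exactly the claim that the columns $Q(n,x_j)$ of $M$ are mutually orthogonal in $\C^{n}$, with $\|Q(n,x_j)\|^{2} = 1/w_j$. For the converse, I would assume column orthogonality with each $Q(n,x_j)$ nonzero (a zero column would make $M$ singular, precluding any quadrature), define $w_j := 1/\|Q(n,x_j)\|^{2} > 0$, and run the same identity backwards: $M^{*}M = W^{-1}$ forces $M$ invertible and hence $MWM^{*} = I$, which by the basis change above is exactness on all entries of $T(n,x)\cdot T(x,n)$.

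There is no deep obstacle here; the content of the statement is precisely this square-matrix swap between $MWM^{*} = I$ and $M^{*}M = W^{-1}$, which is the \textbf{duality of row and column orthogonalities} named in the title---the rows of $M$ are $L^{2}(u)$-orthonormal by construction of $Q$, the columns become $\ell^{2}$-orthogonal under the discrete weights $w_j$, and both facts are carried by the same invertible $M$. The only small item to verify explicitly is that the $w_j$ produced in the converse direction are automatically real and positive, which follows because $M^{*}M$ is Hermitian positive definite; this also confirms the \textit{distinct $x_j$} requirement of Definition~\ref{def-0.1}, since coincident nodes would collapse two columns of $M$ and destroy orthogonality.
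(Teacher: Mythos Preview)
Your proof is correct and follows essentially the same approach as the paper: both reduce the exactness condition to the matrix identity $M W M^{*} = I$ with $M = Q(n,\{x_j\})$, and both exploit squareness to flip this into column orthogonality. The only cosmetic difference is that the paper phrases the pivot as ``$M\,\mbox{\bf diag}\{\sqrt{w_j}\}$ is unitary,'' whereas you write it as $M^{*}M = W^{-1}$; these are the same fact, and your version has the minor advantage of making the positivity of the $w_j$ explicit rather than asserted.
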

\begin{proof}
Obviously, the $n$-term quadrature $\{ x_j, w_j \}$, if exists, 
integrates the Gramian $Q(n,x) \cdot Q(x,n) = I$ with
positive weights $w_j$, namely
\begin{equation}\label{eqn2.25}
  I = Q(n,\{x_j\}) \mbox{\bf diag}\{w_j\} Q(\{x_j\},n)
\end{equation}
so the $n$-by-$n$ matrix $Q(n,\{x_j\}) \mbox{\bf diag}\{\sqrt{w_j}\}$ is 
unitary and thus the columns of $Q(n,\{x_j\})$ are orthogonal.

Now assume that the columns of $Q(n,\{x_j\})$ are orthogonal. 
Let the norm of the j-th column be $1/\sqrt{w_j}$ so that 
$Q(n,\{x_j\}) \mbox{\bf diag}\{\sqrt{w_j}\}$ is unitary, which implies 
that (\ref{eqn2.25}) holds, namely there is a $n$-term quadrature 
$\{ x_j, w_j \}$ to integrate the Gramian $Q(n,x) \cdot Q(x,n) 
= I$; therefore it integrates all inner products in $T(n,x)$.
\qed
\end{proof}

\section{Construct the inner product quadrature}  \label{sec-2}

It is a difficult, nonlinear problem to select $n$ orthogonal columns 
out of infinite number of columns of matrix $Q$. The selection process
can be made a lot easier if the $n$-term quadrature is requried to do
a bit more. In addition to $B$, if the quadrature also integrates 
another Gramian
\begin{equation}\label{eqn2.5}
  A(n,n) = T(n,x) \cdot \mu(x) T(x,n)
\end{equation}
where $\mu$ is a simple function, then the quadrature nodes $x_j$ will 
be recorded in $\mu$. In fact, $\mu(x_j)$ will be eigenvalues of the 
quotient matrix $A B^{-1}$. We first formulate this fact in \S 
\ref{sec-2.1} for polynomial $T(x,n)$. The general case is treated in 
\S \ref{sec-2.2}.

\subsection{Polynomial case} \label{sec-2.1}
Let $T(n,x)$ be the $n$ dimensional space $\Pi_n$ for polynomials 
of degree less than $n$. Let $\mu(x) = x$ so that $A(n,n)$ is
given by
\begin{equation}\label{eqn2.5a}
  A(n,n) = T(n,x) \cdot x T(x,n)
\end{equation}
\begin{theorem}  \label{thrm-1}
If there is a $n$-term quadrature $\{ x_j, w_j \}$ to integrate 
the Gramians $A$ and $B$, then the nodes $x_j$ are eigenvalues of 
$A B^{-1}$
\begin{equation}\label{eqn2.6}
  x_j = \lambda_j(A B^{-1}), \quad j=1\!:\!n  
\end{equation}
provided that $B$ is invertible (when $u$ is not positive definite).
\end{theorem}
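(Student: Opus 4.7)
The plan is to exploit the fact that the quadrature identity turns the two Gramians $A$ and $B$ into explicit rank factorizations whose inner factors are diagonal, so that $AB^{-1}$ becomes a similarity transform of a diagonal matrix.

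First, I would translate the assumption that the quadrature integrates $B$ and $A$ into matrix form. Writing $M = T(n,\{x_j\})$, $W = \mathbf{diag}\{w_j\}$, and $X = \mathbf{diag}\{x_j\}$, definitions (\ref{eqn2.3}) and (\ref{eqn2.5a}) together with the quadrature identity give
\begin{equation*}
  B \;=\; M\,W\,M^{*}, \qquad A \;=\; M\,X\,W\,M^{*},
\end{equation*}
where $M^{*}$ denotes the complex transpose $T(\{x_j\},n)$. The key observation is that, in the polynomial setting $T(n,x)=\Pi_n$, the $n\times n$ matrix $M$ is a Vandermonde matrix evaluated at the $n$ distinct nodes $x_j$, hence invertible.

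Next, I would combine invertibility of $M$ with the standing hypothesis that $B$ is invertible (forced in the non-positive $u$ case) and the fact that $W$ is invertible by Definition \ref{def-0.1}. These imply that $B^{-1} = (M^{*})^{-1} W^{-1} M^{-1}$, and therefore
\begin{equation*}
  A B^{-1} \;=\; M\,X\,W\,M^{*}\,(M^{*})^{-1} W^{-1} M^{-1} \;=\; M\,X\,M^{-1}.
\end{equation*}
So $AB^{-1}$ is similar to the diagonal matrix $X$, and its eigenvalues are precisely the nodes $x_j$, which is (\ref{eqn2.6}).

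The calculation itself is routine once the two factorizations are written down; the one point that deserves explicit comment is the invertibility of $M$. In the polynomial case this is the classical Vandermonde determinant applied to $n$ distinct $x_j$, and I would cite it in a single sentence. The only mild subtlety is handling the non-positive weight $u$: there $B$ need not be invertible a priori, but the statement already assumes it, and with $M$ and $W$ invertible the factorization $B = MWM^{*}$ is automatically invertible, so the hypothesis is consistent with the structure used in the argument.
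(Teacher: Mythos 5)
Your proof is correct and follows essentially the same route as the paper: write $B = MWM^{*}$ and $A = MXWM^{*}$ from the quadrature identity and cancel to get $AB^{-1} = MXM^{-1}$, so the nodes appear as eigenvalues. The only (immaterial) difference is that you obtain invertibility of $M = T(n,\{x_j\})$ and of $W$ directly from the Vandermonde structure and Definition~\ref{def-0.1}, whereas the paper deduces both from the assumed invertibility of $B$ via the factorization $B = MWM^{*}$; both justifications are valid in the polynomial setting.
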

\begin{proof}
The $n$-term quadrature exact for $B$ is of the form
\begin{equation}\label{eqn2.6m}
  B = T(n,\{x_j\}) \mbox{\bf diag}\{w_j\} T(\{x_j\},n)
\end{equation}
It follows from invertibility of $B$ that the quadrature weights $w$ 
have no vanishing entry and $T(n,\{x_j\})$ is invertible; therefore,
\begin{eqnarray} \label{eqn2.36}
  AB^{-1} &=& [T(n,\{x_j\}) \mbox{\bf diag}\{w_jx_j\} T(\{x_j\},n)] 
            [T(n,\{x_j\}) \mbox{\bf diag}\{w_j\} T(\{x_j\},n)]^{-1}  \\
          &=& T(n,\{x_j\}) \mbox{\bf diag}\{x_j\} [T(n,\{x_j\})]^{-1}  
\nonumber  
\end{eqnarray}
Thus, the j-th eigenvalue of $AB^{-1}$ is $x_j$ with the
eigenvector $T(n,\{x_j\})$.   \qed
\end{proof}
If the weight function $u$ is positive, then by the proof of Theorem
\ref{thrm-0.1} the quadrature weights are 
\begin{equation}\label{eqn2.7a}
  w_j = \frac{1}{\|Q(n,x_j) \|_2^2}
\end{equation}
If $u$ is not positive definite, since the quadrature is exact for 
the first column of $B$, $w_j$ will be determined by solution of 
the $n$ linear equations for $w$
\begin{equation}\label{eqn2.7}
  T(n,\{x_j\}) \mbox{\bf diag}\{w_j\} T(\{x_j\},1) = T(n,x) \cdot T(x,1)
\end{equation}
where $T(\{x_j\},1)$ is the first column of $T(\{x_j\},n)$, and
$T(x,1)=T_1(x)$ is the first column of $T(x,n)$.
\begin{theorem}  \label{thrm-2}
Let the weight function $u$ be positive definite, and let $v_j$ 
denote the j-th eigenvector of a matrix. There is a $n$-term
quadrature $\{ x_j, w_j \}$ to integrate the Gramian matrices 
$A$ and $B$ if and only if 
\begin{eqnarray}
  \lambda_j(A B^{-1}) &=& x_j, \quad j=1\!:\!n \label{eqn2.8} \\
  v_j(A B^{-1}) &=& T(n,x_j),  \quad j=1\!:\!n \label{eqn2.9}
\end{eqnarray}
\end{theorem}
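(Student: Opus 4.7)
The plan is to treat the two directions separately, with necessity being essentially a restatement of Theorem \ref{thrm-1} and sufficiency requiring a short change-of-basis argument to invoke Theorem \ref{thrm-0.1}.

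For necessity, if the $n$-term quadrature $\{x_j,w_j\}$ integrates both $A$ and $B$, then equation (\ref{eqn2.36}) in the proof of Theorem \ref{thrm-1} already gives the eigendecomposition
\begin{equation*}
  AB^{-1} = T(n,\{x_j\})\,\mbox{\bf diag}\{x_j\}\,T(n,\{x_j\})^{-1},
\end{equation*}
which reads off both (\ref{eqn2.8}) and (\ref{eqn2.9}) column by column. Positivity of $u$ is not required for this direction.

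For sufficiency, I would pass to the orthonormal basis $Q(n,x)$ of $T(n,x)$, writing $T(n,x) = R\,Q(n,x)$ for an invertible $n \times n$ matrix $R$ (guaranteed to exist since $u$ is positive definite). Under this change of basis one checks $B = RR^{*}$ and $A = R\,\tilde{A}\,R^{*}$, where $\tilde{A} = Q(n,x)\cdot\mu(x)\,Q(x,n)$ is Hermitian because $\mu(x)=x$ is real. Consequently $AB^{-1} = R\,\tilde{A}\,R^{-1}$, so the hypothesis $AB^{-1}\,T(n,x_j) = x_j\,T(n,x_j)$ translates into $\tilde{A}\,Q(n,x_j) = x_j\,Q(n,x_j)$. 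Since $\tilde A$ is Hermitian and its eigenvalues $x_j$ are distinct (the $x_j$ are distinct quadrature nodes), its eigenvectors are orthogonal; hence the columns of $Q(n,\{x_j\})$ are orthogonal.

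Theorem \ref{thrm-0.1} then delivers a quadrature $\{x_j,w_j\}$ with $w_j = 1/\|Q(n,x_j)\|_2^{2}>0$ that integrates $B$, that is, $B = T(n,\{x_j\})\,\mbox{\bf diag}\{w_j\}\,T(\{x_j\},n)$. To conclude that the same quadrature integrates $A$, I would simply left-multiply this identity by the eigendecomposition of $AB^{-1}$:
\begin{equation*}
  A = (AB^{-1})\,B = T(n,\{x_j\})\,\mbox{\bf diag}\{x_j\}\,T(n,\{x_j\})^{-1}\,T(n,\{x_j\})\,\mbox{\bf diag}\{w_j\}\,T(\{x_j\},n) = T(n,\{x_j\})\,\mbox{\bf diag}\{x_j w_j\}\,T(\{x_j\},n),
\end{equation*}
which is exactly the quadrature formula for $A$. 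The subtlest point, and the only one that really needs care, is the passage from the eigenvector condition on $AB^{-1}$ to the orthogonality of the columns of $Q(n,\{x_j\})$; once Hermiticity of $\tilde{A}$ is in hand the rest is bookkeeping, and the weights are automatically positive by construction.
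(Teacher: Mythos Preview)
Your proof is correct and follows essentially the same route as the paper: reduce to an orthonormal basis so that $AB^{-1}$ becomes (similar to) a Hermitian matrix, then invoke Theorem~\ref{thrm-1} for necessity and Theorem~\ref{thrm-0.1} for sufficiency. The paper compresses this into one sentence (``only need to consider orthonormal basis $T(n,x)$ for which $AB^{-1}=A$ is Hermitian''), whereas you spell out the change of basis $T=RQ$ and the identity $A=(AB^{-1})B$ explicitly; the underlying argument is the same.
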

\begin{proof}
Only need to consider orthonormal basis $T(n,x)$ for which 
$AB^{-1}=A$ is Hermitian with orthogonal eigenvectors. Hence the
proofs of Theorems \ref{thrm-1} and \ref{thrm-0.1} can be adopted
to establish necessity and sufficiency of (\ref{eqn2.8}),
(\ref{eqn2.9})  \qed
\end{proof}
Obviously, the $n$-term quadrature $\{ x_j, w_j \}$ integrating 
the two Gramians integrates all polynomials of degree less than 
$2n$. Thus, Type-1 and Type-2 quadratures are the same for the
polynomial case.

\subsection{Arbitrary functions} 
\label{sec-2.2}

In this section we will characterize and construct an inner product 
quadrature for a set of arbitrary functions $T(n,x)$. 
\begin{theorem}  \label{thrm-5}
Let $B$ be invertible, and let $\lambda_j, v_j$ denote the j-th 
eigenvalue and vector of a matrix. If there is a $n$-term 
quadrature $\{ x_j, w_j \}$ to integrate the Gramians $A$ of 
(\ref{eqn2.5}) and $B$, then
\begin{eqnarray}
  \lambda_j(A B^{-1}) &=& \mu(x_j), \quad j=1\!:\!n \label{eqn2.6c} \\
  v_j(A B^{-1}) &=& T(n,x_j),  \quad j=1\!:\!n \label{eqn2.6d}
\end{eqnarray}
\end{theorem}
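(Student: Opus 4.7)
The plan is to reuse the argument of Theorem \ref{thrm-1} almost verbatim, noting that nothing in that proof really used $\mu(x)=x$ aside from bookkeeping in the diagonal entries. So I would begin by writing down the two quadrature identities side by side: exactness on $B$ gives
\begin{equation*}
  B = T(n,\{x_j\}) \, \mbox{\bf diag}\{w_j\} \, T(\{x_j\},n),
\end{equation*}
and exactness on $A$, with the extra factor $\mu(x)$ inside the inner product (\ref{eqn2.5}), gives
\begin{equation*}
  A = T(n,\{x_j\}) \, \mbox{\bf diag}\{w_j\,\mu(x_j)\} \, T(\{x_j\},n).
\end{equation*}

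Next I would extract the invertibility ingredients exactly as in Theorem \ref{thrm-1}. Since $B$ is invertible by hypothesis, the factorization above forces every $w_j\neq 0$ and forces the $n$-by-$n$ matrix $T(n,\{x_j\})$ to have full rank; this is the one point where a reader might want a sentence of justification, since a vanishing $w_j$ or a rank-deficient $T(n,\{x_j\})$ would collapse the right-hand side. With those two facts in hand, the outer factors and the $\mbox{\bf diag}\{w_j\}$ factor cancel when forming $AB^{-1}$, yielding
\begin{equation*}
  AB^{-1} = T(n,\{x_j\}) \, \mbox{\bf diag}\{\mu(x_j)\} \, [T(n,\{x_j\})]^{-1}.
\end{equation*}
This is already an eigendecomposition: the diagonal entries $\mu(x_j)$ are the eigenvalues, establishing (\ref{eqn2.6c}), and the columns of $T(n,\{x_j\})$, namely the vectors $T(n,x_j)$ defined in (\ref{eqn2.2}), are the corresponding eigenvectors, establishing (\ref{eqn2.6d}).

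There is essentially no obstacle here beyond checking that the polynomial-specific step in Theorem \ref{thrm-1} (pulling $x_j$ out of $\mbox{\bf diag}\{w_j x_j\}$) generalizes cleanly to pulling $\mu(x_j)$ out of $\mbox{\bf diag}\{w_j\,\mu(x_j)\}$; since $\mu$ is just a pointwise multiplier and the diagonal structure only records its values at the nodes, no new issues arise. I would therefore keep the proof short and simply indicate that it is the argument of Theorem \ref{thrm-1} with $x_j$ replaced throughout by $\mu(x_j)$.
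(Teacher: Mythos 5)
Your proposal is correct and matches the paper exactly: the paper simply states that the proof is identical to that of Theorem \ref{thrm-1}, and you have carried out precisely that substitution, replacing $\mbox{\bf diag}\{w_j x_j\}$ by $\mbox{\bf diag}\{w_j\,\mu(x_j)\}$ and cancelling as before. Your added remark that invertibility of $B$ forces $w_j\neq 0$ and $T(n,\{x_j\})$ nonsingular is the same justification the paper gives in the proof of Theorem \ref{thrm-1}.
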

The proof is identical to that of Theorem \ref{thrm-1}. 
\begin{definition}  \label{def-1}
A function $\mu$ is said to be a minimal function of $T(n,x)$ if 
\begin{equation}\label{eqn2.10a}
  r(T,\mu) =: \mbox{\bf rank}\{ [(I-P_n) \mu(x)T(x,n) ] \} = 1
\end{equation}
\end{definition}
In other words, the $n$ functions $\mu(x)T(x,n)$ don't entirely lie in 
the span of $T(x,n)$, but the part of $\mu(x)T(x,n)$ that is outside of 
$T(x,n)$ is required to be minimal - the residual dimension $r(T,\mu)$ 
is 1. For example, $\mu=\exp(ix)$ is a minimal function for the subspace
\begin{equation}
  E_m = \mbox{\bf span}[\exp(ikx), \; k=-m:m],
            \quad m>0, \; x \in [-\pi, \pi]  \label{eqn3.3b}
\end{equation}
More general definition for minimal function will be given in
\S \ref{sec-3.2}. Modifications are also necessary for higher 
dimensions.
\begin{theorem}  \label{thrm-6}
Let the weight function $u$ be positive definite, and let $v_j$ 
denote the j-th eigenvector of a matrix. The three conditions are 
equivalent \\
(i) There is a $n$-term quadrature $\{ x_j, w_j \}$ to integrate 
$A$ and $B$ \\
(ii) The quotient matrix $A B^{-1}$ is diagonalizable with
\begin{eqnarray}
  \lambda_j(A B^{-1}) &=& \mu(x_j), \quad j=1\!:\!n \label{eqn2.8c} \\
  v_j(A B^{-1}) &=& T(n,x_j),  \quad j=1\!:\!n \label{eqn2.9d}
\end{eqnarray}
(iii) There exist such $\{ x_j \}$ that for every $p_n \in 
\mbox{\bf span}[(I-P_n) \mu(x) T(n,x)]$
\begin{equation}\label{eqn2.11}
  p_n(x_j) = 0, \quad j=1\!:\!n  
\end{equation}
and that the $n$-by-$n$ matrix $T(n,\{x_j\})$ is invertible.
\end{theorem}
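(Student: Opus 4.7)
My plan is to close the cycle (i) $\Rightarrow$ (iii) $\Rightarrow$ (ii) $\Rightarrow$ (i), working throughout in an orthonormal basis $Q(n,x)$ for $T(n,x)$, which exists because $u$ is positive definite. In that basis $B_Q = I$, so the quotient $AB^{-1}$ becomes the Hermitian Gramian $A_Q = Q(n,x)\cdot\mu(x)Q(x,n)$, and the reproducing kernel of $T$ has the form $P_n(x_k,x) = \sum_j Q_j(x_k)\overline{Q_j(x)}$. Each of the three conditions is invariant under the invertible change of basis from $T$ to $Q$, so the reduction is legitimate.

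For (i) $\Rightarrow$ (iii) I decompose $\mu(x)T(x,n) = P_n\mu(x)T(x,n) + H(x)$ with $H = (I-P_n)\mu(x)T(x,n)$; then $T(n,x)\cdot H(x) = 0$ because $H$ is column-wise orthogonal to $T$. The quadrature, being exact for both $A$ and $B$, must return zero when applied to $T(n,x)\cdot H(x)$, giving the matrix identity $T(n,\{x_k\})\,\mbox{\bf diag}\{w_k\}\,\overline{H(\{x_k\},n)} = 0$. Exactness on $B$ combined with invertibility of $B$ forces $T(n,\{x_k\})$ invertible and each $w_k$ nonzero, so $H(\{x_k\},n) = 0$, which is (iii).

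For (iii) $\Rightarrow$ (ii), the $i$-th entry of $A_Q Q(n,x_k)$ works out to $\int u(x) Q_i(x)\mu(x) P_n(x_k,x)\,dx$ once the sum $\sum_j Q_j(x_k)\overline{Q_j(x)}$ inside it is recognised as the reproducing kernel. Decomposing $\mu Q_i = P_n(\mu Q_i) + h_i$ and using the reproducing property on the in-space part (with $h_i \perp T$ killing the other part), the integral reduces to $P_n(\mu Q_i)(x_k) = \mu(x_k)Q_i(x_k) - h_i(x_k)$, and (iii) kills $h_i(x_k)$. Thus $A_Q Q(n,x_k) = \mu(x_k)Q(n,x_k)$, and the invertibility of $Q(n,\{x_k\})$ (equivalent to that of $T(n,\{x_k\})$) supplies the $n$ linearly independent eigenvectors needed for diagonalizability.

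For (ii) $\Rightarrow$ (i), Hermiticity of $A_Q$ together with the sesquilinear identity $Q(n,x_l)^* A_Q Q(n,x_k) = \overline{Q(n,x_k)^* A_Q Q(n,x_l)}$ gives $(\mu(x_k) - \mu(x_l))\,Q(n,x_l)^* Q(n,x_k) = 0$, so the columns of $Q(n,\{x_k\})$ corresponding to distinct $\mu$-values are orthogonal. Theorem~\ref{thrm-0.1} then delivers weights $w_k = 1/\|Q(n,x_k)\|^2$ exact for $B$, and the eigendecomposition expansion $A_Q = \sum_k w_k\mu(x_k) Q(n,x_k) Q(x_k,n)$ confirms exactness for $A$. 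The main obstacle I anticipate is the degenerate case of repeated $\mu$-values, where orthogonality within a shared eigenspace is not automatic from Hermiticity alone; I expect to resolve this either via a rank argument built on Definition~\ref{def-1} (the minimal-function framework rules out accidental degeneracies for generic $\mu$), or by running (ii) $\Rightarrow$ (iii) through the reverse of the reproducing-kernel computation above and then invoking the weight-construction side of (iii) $\Rightarrow$ (i) directly.
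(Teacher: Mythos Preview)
Your argument is essentially the paper's: your reproducing-kernel computation for (iii) $\Rightarrow$ (ii) is exactly the chain of equalities the paper writes at (\ref{eqn2.8h}), read in one direction, and your (ii) $\Rightarrow$ (i) via Hermiticity of $A_Q$ together with Theorem~\ref{thrm-0.1} is precisely what the paper means when it says the (i) $\Leftrightarrow$ (ii) step is ``similar to that of Theorem~\ref{thrm-2}.'' Your direct (i) $\Rightarrow$ (iii) via the decomposition $\mu T = P_n\mu T + H$ and exactness on $B$ is a clean addition the paper does not make (it routes that implication through (ii) instead). The degenerate case $\mu(x_k)=\mu(x_l)$ you flag is a gap the paper shares --- its Theorem~\ref{thrm-2} proof simply asserts ``Hermitian with orthogonal eigenvectors'' without treating repeated eigenvalues --- so you are not behind the paper on that point; neither of your proposed fixes fully closes it, but neither does the paper.
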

\begin{proof}
The proof of equivalency of (i) and (ii) is similar to that of Theorem
\ref{thrm-2}. Now we establish equivalency of (ii) and (iii). By (ii), 
and by (\ref{eqn2.4}) and (\ref{eqn2.4a}), 
\begin{eqnarray}
  \mu(x_j) T(n,x_j) 
&=& A B^{-1} T(n,x_j), \quad j=1\!:\!n \label{eqn2.8h} \\
&=& T(n,x) \cdot \mu(x) T^+(x,n) T(n,x_j)) \nonumber \\
&=& [T(n,x)\mu(x)] \cdot P_n(x,x_j) \nonumber \\
&=& \{ P_n [T(n,x)\mu(x)] \}_{x=x_j}  \nonumber \\
&=& \{ (I-I+P_n) [T(n,x)\mu(x)] \}_{x=x_j}  \nonumber \\
&=& T(n,x_j)\mu(x_j) - \{ (I-P_n) [T(n,x)\mu(x)] \}_{x=x_j}  \nonumber 
\end{eqnarray}
which holds if and only if 
\begin{equation}\label{eqn2.11a}
  \{(I-P_n) [\mu(x) T(n,x)] \}(x_j) = 0, \quad j=1\!:\!n,  
\end{equation}
namely (\ref{eqn2.11}) holds, and the $n$-by-$n$ matrix $T(n,\{x_j\})$ 
is invertible. \qed
\end{proof}
Theorem \ref{thrm-6} does not require that $\mu$ is minimal, but if
it is not then all $p_n \in \mbox{\bf span}[(I-P_n) \mu(x) T(n,x)]$
must share $n$ common roots at the quadrature nodes ${x_j}$. In other
words, it is unlikely for a $n$-term quadrature to integrate both
$B$ and $A$ exactly if $\mu$ is not minimal.
 
If $T(n,x)=\Pi_n$, polynomials of degree less than $n$, then 
$\mu(x) = \alpha x + \beta$, $\alpha \not =0$ is a minimal function 
whereas $x^2$ is not. With a minimal $\mu$, $p_n$ of (\ref{eqn2.11}) 
is the orthogonal polynomial of degree $n$, provided that the weight 
$u$ is positive definite. The condition (\ref{eqn2.11}) is well known 
as a part of the Gauss formula. 

However, when the functions $T(n,x)$ are not polynomials, the $n$-term
quadrature formula may not be a Gaussian quadrature in the classical
sense. In general, to integrate the inner products in $A$ and $B$ is
not the same as to integrate some $2n$ functions. Conversely, given 
a set of $2n$ functions to integrate by a Gaussian quadrature, additional 
work is required to reformulate this Type-1 quadrature as a Type-2, 
inner product quadrature. This issue will be addressed in the next 
section.

\section{Product law and minimal functions}  \label{sec-3}

In this section we will establish framework for converting the Type-1
quadrature to Type-2, inner product quadrature. While an inner 
product quadrature is natural in its own right and immediately useful 
in many applications, other applications require Type-1 quadratures. 
For many familiar and widely used families of functions the two 
quadrature problems turn out to be equivalent or nearly so. We 
introduce the notion of factor space in \S \ref{sec-3.1} and 
minimal function in \S \ref{sec-3.2} to connect the two types of
quadratures.

\subsection{Factor spaces} \label{sec-3.1}

In this section we introduce the product law and factor space for 
a given set of $2n$ functions, so as to convert a Type-1 quadrature 
for the $2n$ functions to a Type-2 for the Gramian matrix of the 
factor space. 

Let the rows of $V(2n,x): L^2[a,b] \mapsto \C^{2n}$ consist of
a set of $2n$ linearly independent functions, which span a linear 
subspace of $L^2[a,b]$ denoted also by $V(2n,x)$.
\begin{definition}  \label{def-2}
The linear subspace $V(2n,x)$ is said to have a factor space 
$T(n,x)$ with a multiplier $\mu$ if 
\begin{equation}\label{eqn3.2}
  \mbox{\bf span}\{T_i(x) \bar{T}_j(x), \; T_i(x) \mu(x) \bar{T}_j(x), \; 
         1\leq i,j \leq n \} = V(2n,x)
\end{equation}
\end{definition}
As an example, the linear space $\Pi_{2n}$ for polynomials of degree
less than $2n$ has a factor space $\Pi_n$ with $\mu(x)=x$ as the 
multiplier. Likewise, let
\begin{equation}\label{eqn3.3a}
  G_m = \mbox{\bf span}[1, \sin(jx), \cos(jx), \; j=1\!:\!m-1], 
            \quad m>1 
\end{equation}
Then $G_m$ is a factor space of $G_{2m}$ with $\mu(x)=\cos(x)$,
whereas $E_m$ of (\ref{eqn3.3b}) is a factor space of $E_{2m}$ 
with $\mu=1$. 

Obviously, a quadrature integrating the inner products in the 
factor space will also integrate the functions in $V(2n,x)$. 
In this respect, the notion of a factor space can be relaxed in two 
directions (i) Let the span in (\ref{eqn3.2}) include, rather
than equal to, $V(2n,x)$ (ii) Let the span in (\ref{eqn3.2}) 
approximate $V(2n,x)$ to a given precision.
\begin{definition}  \label{def-3}
The linear subspace $V(2n,x)$ is said to obey the product 
law if there exist $n$ functions $T(n,x)$ such that $V(2n,x)$ is
a subspace of the product space
\begin{equation}\label{eqn3.4}
  \Pi(T) = \mbox{\bf span}\{T_i(x) \bar{T}_j(x), \; 1\leq i,j \leq n \}  
\end{equation}
A linear subspace $V(2n,x)$ is said to obey the product law
to precision $\epsilon>0$ if for any $f \in V(2n,x)$ the distance 
between $f$ and $\Pi(T)$ is $\epsilon$.
\end{definition}
As an example, by Neumann's addition formula 9.1.78 of \cite{Redtable}, 
\begin{equation}\label{eqn3.6}
  J_m(x) =  \sum_{k=0}^m J_k(x/2) J_{m-k}(x/2) + 
            2\sum_{k=1}^\infty (-1)^k J_k(x/2) J_{m+k}(x/2) 
\end{equation}
For $x \in [0,b]$, and for a prescribed precision $\epsilon>0$, 
there exists $\delta>0$ so that 
\begin{equation}\label{eqn3.7}
  | J_s(x/2) | < \epsilon, \quad s>b+\delta
\end{equation}
Thus, to precision $O(\epsilon)$, only finite number of terms in 
(\ref{eqn3.6}) remain: $J_s(x/2)$ for $0\leq s \leq b+\delta$. 
Consequently, the space
\begin{equation}\label{eqn3.8}
  \mathcal{V} = \mbox{\bf span}\{J_m(x), \; 0\leq m \leq 2(b+\delta)\} 
\end{equation}
obeys the product law to precision $O(\epsilon)$, and a quadrature 
integrating the inner products in 
\begin{equation}\label{eqn3.9}
  \mathcal{T} = \mbox{\bf span}\{J_s(x/2), \; 0\leq s \leq b+\delta\} 
\end{equation}
exactly or to precision $O(\epsilon)$ will also integrate functions
in $\mathcal{V}$ to precision $O(\epsilon)$. 

Factor space and product law can also be extended to a family of 
infinite number of functions, denoted by $G(k,x)$, $x \in [a,b]$, 
with $k \in [\alpha, \beta]$ the family parameter.
\begin{definition}  \label{def-3.5}
The family of functions $G(k,x)$ obeys the product law if there exist 
two families of functions $T(k,x)$, $S(\kappa,x)$, $x \in [a,b]$, 
$k \in [\alpha_1, \beta_1]$, $\kappa \in [\alpha_2, \beta_2]$ such 
that $G(k,x)$ is a subset of the product space
\begin{equation}\label{eqn3.4x}
  \Pi(S,T) = \mbox{\bf span}\{T(k,x) S(\kappa,x), \; 
             k\in [\alpha_1, \beta_1], \; \kappa \in [\alpha_2, \beta_2] \}
\end{equation}
Moreover, $G(k,x)$ is said to have factor spaces $T(k,x)$ and 
$S(k,x)$ if
\begin{equation}\label{eqn3.2x}
  \mbox{\bf span}\{G(k,x), \; \alpha \leq k \leq \beta \} = \Pi(S,T) 
\end{equation}
Finally, $G(k,x)$ is said to have a factor space $T(k,x)$ if it has
the factor spaces $T(k,x)$ and $S(k,x)$ with $S(k,x)=\bar{T}(k,x)$. 
\end{definition}
Accordingly, the $n$-by-$n$ Gramians $B$ of (\ref{eqn2.3}) and $A$ of 
(\ref{eqn2.5}) can be extended to operator case.
\begin{definition}  \label{def-3.6}
Given a function $\mu(x)$, the linear operators defined by
\begin{eqnarray}
  A(k,k') &=& T(k,x) \cdot \mu(x) S(x,k') \label{eqn3.44} \\
  B(k,k') &=& T(k,x) \cdot        S(x,k') \label{eqn3.45}
\end{eqnarray}
are referred to as the Gramians associated with the factor spaces 
$T(k,x)$ and $S(k,x)$.
\end{definition}
\begin{theorem}  \label{thrm-3.3}
Let the $m$-by-$n$ matrices
\begin{eqnarray}
  A(m,n) &=& T(m,x) \cdot \mu(x) S(x,n) \label{eqn3.44a} \\
  B(m,n) &=& T(m,x) \cdot        S(x,n) \label{eqn3.45b}
\end{eqnarray}
be the Gramians associated with the $m$-by-$[a,b]$ matrix $T(m,x)$ 
and the $n$-by-$[a,b]$ matrix $S(n,x)$ and a scalar function $\mu$.
Let the rank of $B$ be $r$. If there is a $r$-term quadrature 
$\{ x_j, w_j \; j=1\!:\!r \}$ precise for $A$ and $B$, then the $m$-by-$m$ 
matrix $A B^+$ has $r$ eigenvalues and corresponding eigenvectors of 
the form
\begin{eqnarray}
  \lambda_j(A B^+) &=& \mu(x_j), \quad j=1\!:\!r \label{eqn3.47} \\
  v_j(A B^+) &=& T(m,x_j),       \quad j=1\!:\!r \label{eqn3.48}
\end{eqnarray}
The remaining $m-r$ eigenvalues are zero.
\end{theorem}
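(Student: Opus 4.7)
The plan is to follow the template of Theorem~\ref{thrm-1}, adapting its inverse-based manipulation to the rectangular, rank-deficient setting by using the Moore--Penrose pseudo-inverse. First, exactness of the $r$-term quadrature on both Gramians yields the simultaneous factorizations
\[
B(m,n) \;=\; T(m,\{x_j\})\,\mbox{\bf diag}\{w_j\}\,S(\{x_j\},n),
\qquad
A(m,n) \;=\; T(m,\{x_j\})\,\mbox{\bf diag}\{w_j\mu(x_j)\}\,S(\{x_j\},n),
\]
with every $w_j$ nonzero by Definition~\ref{def-0.1}.

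Next I would leverage the rank hypothesis. Since $B$ has rank $r$ and factors through an $r$-dimensional middle space, the $m$-by-$r$ matrix $T(m,\{x_j\})$ must have full column rank and the $r$-by-$n$ matrix $S(\{x_j\},n)$ must have full row rank. The identities $T(m,\{x_j\})^+\,T(m,\{x_j\}) = I_r$ and $S(\{x_j\},n)\,S(\{x_j\},n)^+ = I_r$ follow, and the standard full-rank factorization formula gives
\[
B^+ \;=\; S(\{x_j\},n)^+\,\mbox{\bf diag}\{1/w_j\}\,T(m,\{x_j\})^+.
\]

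Substituting this into $AB^+$, collapsing the $SS^+=I_r$ factor and using $\mbox{\bf diag}\{w_j\mu(x_j)\}\,\mbox{\bf diag}\{1/w_j\} = \mbox{\bf diag}\{\mu(x_j)\}$, reduces the product to $T(m,\{x_j\})\,\mbox{\bf diag}\{\mu(x_j)\}\,T(m,\{x_j\})^+$. Right-multiplying by $T(m,\{x_j\})$ and invoking $T^+T=I_r$ yields $T(m,\{x_j\})\,\mbox{\bf diag}\{\mu(x_j)\}$, which, read column-by-column, produces the desired eigenrelations $AB^+\,T(m,x_j) = \mu(x_j)\,T(m,x_j)$ for $j=1\!:\!r$. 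Linear independence of the columns of $T(m,\{x_j\})$ (from its full column rank) supplies $r$ linearly independent eigenvectors, while the factored form of $AB^+$ exhibits rank at most $r$, forcing the remaining $m-r$ eigenvalues of the $m$-by-$m$ matrix to vanish.

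The main obstacle I anticipate is establishing the pseudo-inverse identity $B^+ = S^+\,\mbox{\bf diag}\{1/w_j\}\,T^+$: the general reversal rule $(XYZ)^+ = Z^+Y^+X^+$ can fail, so this step must be justified precisely by the full column rank of $T(m,\{x_j\})$ and the full row rank of $S(\{x_j\},n)$ extracted from $\mathrm{rank}(B)=r$ (a thin-SVD of each factor suffices). Once this identity is in place, the remainder of the argument is essentially a transcription of the calculation in Theorem~\ref{thrm-1} with $[\cdot]^{-1}$ replaced by $[\cdot]^+$.
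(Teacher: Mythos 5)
Your proposal is correct and follows essentially the same route as the paper's own proof: both extract full column rank of $T(m,\{x_j\})$ and full row rank of $S(\{x_j\},n)$ from $\mathrm{rank}(B)=r$, use the full-rank factorization identity $B^+=S_r^+\,\mbox{\bf diag}\{w_j\}^{-1}\,T_r^+$, collapse $AB^+$ to $T_r\,\mbox{\bf diag}\{\mu(x_j)\}\,T_r^+$ via $S_rS_r^+=I$, and read off the $r$ eigenpairs from $T_r^+T_r=I$ with the remaining $m-r$ eigenvalues vanishing by the rank bound. Your explicit caution about when the pseudo-inverse reversal rule is valid is a point the paper asserts without comment, but it is the same identity justified the same way.
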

\begin{proof}
The proof is similar to that of Theorem \ref{thrm-1}. Since $B$ is 
of rank $r$, the existence of the $r$-term quadrature, exact for $B$, 
implies that the quadrature weights $w$ have no vanishing entry and 
the $m$-by-$r$ matrix $T_r = T(m,\{x_j\})$ and the $r$-by-$n$ matrix 
$S_r = S(\{x_j\},n)$ are both full rank; therefore, 
\begin{equation}\label{eqn3.49}
  B^+ = [T_r \, \mbox{\bf diag}\{w_j\} \, S_r]^+ = S_r^+ \, \mbox{\bf diag}\{w_j\}^{-1} \, T_r^+
\end{equation}
Using $S_r S_r^+ = I$ we have
\begin{eqnarray} \label{eqn3.50}
  AB^+ &=& T_r \, \{\mu(x_j)\} \, \mbox{\bf diag}\{w_j\} \, S_r \; 
           S_r^+ \mbox{\bf diag}\{w_j\}^{-1} T_r^+   \\
       &=& T_r \, \{\mu(x_j)\} \, T_r^+  \nonumber  
\end{eqnarray}
It follows immediately that the $m$-by-$m$ square matrix $AB^+$, 
being of rank $r$ or less, will have $m-r$ zero eigenvalues, and 
owing to $T_r^+ T_r = I$ the $r$ remaining eigenvalues and vectors 
are given by (\ref{eqn3.47}), (\ref{eqn3.48}).   \qed
\end{proof}
Theorem \ref{thrm-1} is a special case of Theorem \ref{thrm-5} which
is a special case of Theorem \ref{thrm-3.3}. For quadrature design, 
we are only interested in eigenvectors, if exist, of the form 
$T(m,x_j)$.
\begin{definition}  \label{def-3.7}
The eigenvectors of $AB^+$ of the form $T(m,x_j)$ are referred to 
as the position eigenvectors.
\end{definition}
The existence of the position eigenvectors is necessary for that 
of a Gaussian quadrature. The next theorem, straightforward to 
verify, says that the eigenvalues for the quotient matrix is 
invariant under the change of bases by (\ref{eqn3.50a}), 
(\ref{eqn3.50b}). 
\begin{theorem}  \label{thrm-3.3a}
Let the square matrices $t(m,m)$ and $s(n,n)$ be invertible. Let 
the change of bases, from $T(m,x)$ to $\tilde{T}(m,x)$, and from
$S(n,x)$ to $\tilde{S}(n,x)$, be defined by
\begin{eqnarray} 
  \tilde{T}(m,x) &=& t(m,m) T(m,x) \label{eqn3.50a}\\
  \tilde{S}(n,x) &=& s(n,n) S(n,x) \label{eqn3.50b}
\end{eqnarray}
Then the two quotient matrices $AB^+$ associated with the old and
new bases are similar, with $t(m,m)$ as the similarity transform. 
\end{theorem}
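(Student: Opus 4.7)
The plan is to compute how $A$ and $B$ transform under the change of bases, then use invertibility of $s$ to see that the $s$-factors cancel out of the quotient $AB^+$, leaving conjugation by $t$.

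First I would apply the definitions (\ref{eqn3.44a}) and (\ref{eqn3.45b}) directly to the new bases. Since $\tilde{T}(m,x) = t(m,m) T(m,x)$, taking the complex transpose gives $\tilde{S}(x,n) = S(x,n) s^*(n,n)$. Pulling the constant matrices outside the inner product, one obtains
\begin{eqnarray*}
  \tilde{A}(m,n) &=& t(m,m) \, A(m,n) \, s^*(n,n), \\
  \tilde{B}(m,n) &=& t(m,m) \, B(m,n) \, s^*(n,n).
\end{eqnarray*}

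Next I would compute $\tilde{B}^+$. Because $t(m,m)$ and $s(n,n)$ (hence $s^*(n,n)$) are invertible square matrices, the reverse-order law for pseudoinverses applies cleanly and yields
\begin{equation*}
  \tilde{B}^+ = (s^*(n,n))^{-1} \, B^+(n,m) \, t^{-1}(m,m).
\end{equation*}
Substituting, the factor $s^*(n,n)$ on the right of $\tilde{A}$ meets its inverse on the left of $\tilde{B}^+$ and cancels, giving
\begin{equation*}
  \tilde{A}\tilde{B}^+ = t(m,m) \, A B^+ \, t^{-1}(m,m),
\end{equation*}
which is the asserted similarity with transform $t(m,m)$.

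There is no genuine obstacle here; the only point that requires a moment of care is bookkeeping on the conjugate-transpose side, making sure that the $s$-factor and its inverse appear on the correct sides of $B^+$ so that they cancel rather than accumulate. Note also that the transform on the $S$-side drops out entirely, which is consistent with the earlier theorems where only the eigenvectors of the form $T(m,x_j)$ (on the $T$-side) carry quadrature information.
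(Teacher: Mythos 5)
Your first step is correct: with the dot product (\ref{eqn2.1}) one gets $\tilde A = t\,A\,s^*$ and $\tilde B = t\,B\,s^*$. The gap is in the next step. The identity $(t\,B\,s^*)^+ = (s^*)^{-1}B^+\,t^{-1}$ is \emph{not} a consequence of invertibility of $t$ and $s$: the reverse-order law for the Moore--Penrose inverse fails for invertible but non-unitary outer factors whenever the middle factor is rank-deficient. A two-by-two counterexample: for
\begin{equation*}
 B=\begin{pmatrix}1&0\\0&0\end{pmatrix},\qquad s^*=\begin{pmatrix}1&1\\0&1\end{pmatrix},\qquad t=I,
\end{equation*}
one has $(Bs^*)^+=\tfrac12\begin{pmatrix}1&0\\1&0\end{pmatrix}$, whereas $(s^*)^{-1}B^+t^{-1}=\begin{pmatrix}1&0\\0&0\end{pmatrix}$; the latter is not the pseudoinverse of $Bs^*$ because the product $\bigl((s^*)^{-1}B^+\bigr)\bigl(Bs^*\bigr)$ is not Hermitian. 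Your argument is therefore valid verbatim only when $B$ is square and invertible, so that $B^+=B^{-1}$ and every factor has an honest inverse (the setting of Theorems \ref{thrm-1}, \ref{thrm-5}, \ref{thrm-6}), or when $t$ and $s$ are unitary.

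The theorem, however, is placed in the setting of Theorem \ref{thrm-3.3}, where $B$ is $m$-by-$n$ of rank $r$ and $AB^+$ is a genuine pseudoinverse quotient. There the cancellation has to go through the full-rank factorization $B=T_r\,\mbox{\bf diag}\{w_j\}\,S_r$ as in (\ref{eqn3.49}): one obtains $\tilde B^+=(S_rs^*)^+\,\mbox{\bf diag}\{w_j\}^{-1}\,(tT_r)^+$ and hence $\tilde A\tilde B^+=(tT_r)\,\mbox{\bf diag}\{\mu(x_j)\}\,(tT_r)^+$, which is diagonalizable with the same spectrum $\{\mu(x_j)\}\cup\{0\}$ as $AB^+=T_r\,\mbox{\bf diag}\{\mu(x_j)\}\,T_r^+$ and is therefore similar to it. But $(tT_r)^+\ne T_r^+t^{-1}$ for non-unitary $t$ (the two left inverses of $tT_r$ annihilate different complements of its range), so the similarity transform is not $t$ itself in this case. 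In short, your proof establishes the claim only in the full-rank (or unitary) case; in the rank-deficient case the pseudoinverse identity you invoke fails, and what survives is the eigenvalue invariance the paper is really after, via a similarity that need not be $t$.
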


\subsection{Minimal functions} \label{sec-3.2}

Minimal function was defined in \S \ref{sec-2.2} for a set of
$n$ functions $T(n,x)$. In this section, we will introduce 
minimal function for a family of infinite number of functions 
$G(k,x)$. 
\begin{definition}  \label{def-5}
{\sc (Informal)}  A method to grow a family of functions $G$ is to 
multiply the existing family members by a function $\mu$, which 
may not be in the family. The resulting functions are linearly 
combined with those in the family to generate a new function. The 
function $\mu$, with proper normalization, is the minimal function. 
\end{definition}
Typical 3-term recursions use this scheme to generate a class 
of functions. For example, the Bessel functions require $\mu(x) = 1/x$ 
as the multiplier to push the family one step forward, or backward. 

For a precise definition of minimal function, let the new function 
$G(\beta+h,x)$ be generated by linear combination of $\mu(x) G(k,x)$ 
and $G(k,x)$ over $k \in [\alpha, \beta]$. We scale $\mu$ such that 
it appears in the linear combination as follows 
\begin{equation}\label{eqn3.12}
  G(\beta+h, x) = h G(\beta,x)\mu(x) + G(\beta,x) + \mbox{tail}
\end{equation}
The tail vanishes as $h \rightarrow 0$, provided that $\mu$ is the log 
derivative of $G$ with respect to $k$.
\begin{definition}  \label{def-6}
Let $G(k,x)$ be differentiable with respect to $k$ in $[\alpha,
\beta]$ for almost every $x \in [a,b]$. The function 
\begin{equation}\label{eqn3.13}
  \mu(x,k)|_{k=\beta} = \left\{\frac{\partial}{\partial k} 
               \log G(k,x)\right\}_{k=\beta}
\end{equation}
is referred to as a specific minimal function of $G(k,x)$ at $k=\beta$.
If $\mu(x,k)$ is independent of $k$ or if the dependence is separable
\begin{equation}\label{eqn3.13a}
  \mu(x,k) = p(k) q(x) \quad \mbox{so that} \quad 
          \partial_k G(k,x) = p(k)\, G(k,x)\, q(x)
\end{equation}
then it is referred to as the (general) minimal function of $G(k,x)$.
\end{definition}
By (\ref{eqn3.13}), the minimal functions for the power functions
(\ref{eqn3.10}) and exponentials (\ref{eqn3.11}) are $\log(x)$ and
$x$. By $\mu$'s dependence on $k$, we divide $G$ into three varieties
\begin{description}
 \item (V.1) It is independent of $k$. 
 \item (V.2) The dependence is separable.
 \item (V.3) The dependence is not separable.
\end{description}
There are two cases for constructing a quadrature, whether Type-1 or 2 
\begin{description}
 \item (C.1) Design a quadrature with a given weight $u$. 
 \item (C.2) Design a quadrature without $u$ given explicitly.
\end{description}
(C.1) is typical of quadrature design for numerical integration; 
the weight $u$ is given explicitly. (C.2) arises from certain 
applications such as inverse problems or signal processing where 
the measurement or signal is the exact integrals 
\begin{equation}\label{eqn1a}
    s(k) = \int_a^b G(k,x) u(x) dx, \quad k \in [\alpha, \beta]
\end{equation}
with an underlying, fixed, but unknown $u$.
 
For (C.2), the only data available for Type-1 quadrature design is 
$s(k)$. When reformulated as a Type-2 quadrature problem, (V.1) and 
(V.2), not (V.3), will be useful in constructing the Gramians $A$ 
and $B$ out of the data $s(k)$. The procedures for constructing the 
Gramians by (V.1) and (V.2) are so similar that in the sequel we will 
only consider (V.1), namely (V.2) with $p(k)\equiv 1$.

For (C.1), the weight function $u$ is given and the Gramians can 
be constructed directly by their definitions (\ref{eqn3.44}) and
(\ref{eqn3.45}) for a Type-2 quadrature, or for a Type-1 quadrature
provided that $G$ has factor spaces $T(k,x)$ and $S(k,x)$.
(V.3) will be useful for (C.1).   

A specific minimal function exists for an arbitrary system of 
functions $G(k,x)$. In contrast, only certain function classes
have (general) minimal functions. The next theorem is a direct
consequence of Definition \ref{def-6}. 
\begin{theorem}  \label{thrm-3.4}
$G(k,x)$ has a minimal function if and only if 
\begin{equation}\label{eqn3.13b}
  G(k,x) = \exp(p(k)q(x)) r(x) 
\end{equation}
Moreover, if $G(k,x)$ has a minimal function then it has a 
factor space
\begin{equation}\label{eqn3.13c}
  T(k,x) = [G(k,x)]^{1/2} = \exp(p(k)q(x)/2) \sqrt{r(x)} 
\end{equation}
\end{theorem}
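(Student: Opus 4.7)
The statement has two parts: an ``iff'' identifying those $G$ with a minimal function as exponentials, and a ``moreover'' exhibiting a concrete factor space. The unifying observation is that, by Definition \ref{def-6}, existence of a minimal function is synonymous with separability of the logarithmic $k$-derivative, $\partial_k\log G(k,x)=\tilde p(k)\,q(x)$ (the ``independent of $k$'' case being $\tilde p\equiv 1$). With this reading, the claim (\ref{eqn3.13b}) is essentially just the integrated form of this PDE.

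For the forward implication I would integrate the relation $\partial_k\log G(k,x)=\tilde p(k)\,q(x)$ in $k$ with $x$ held fixed, obtaining $\log G(k,x)=p(k)\,q(x)+c(x)$, where $p$ is an antiderivative of $\tilde p$ and $c(x)$ absorbs the $x$-dependent constant of integration. Exponentiating and setting $r(x)=e^{c(x)}$ yields (\ref{eqn3.13b}). The converse is an immediate computation: starting from $G=\exp(p(k)q(x))\,r(x)$, differentiating $\log G=p(k)q(x)+\log r(x)$ in $k$ gives $\partial_k\log G=p'(k)q(x)$, which is separable and so qualifies as a minimal function in the sense of Definition \ref{def-6}.

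For the factor-space claim I would take $T(k,x)=e^{p(k)q(x)/2}\sqrt{r(x)}$ and verify Definition \ref{def-3.5} with $S=\bar T$. Under the natural working assumption that $p,q,r$ are real-valued and $r\ge 0$ (so that $G\ge 0$ and the square root is unambiguous), the identity $T(k,x)\overline{T(k,x)}=e^{p(k)q(x)}r(x)=G(k,x)$ shows immediately that each $G(k,\cdot)$ lies in $\Pi(T,\bar T)$. For the reverse containment, a typical generator $T(k,x)\overline{T(\kappa,x)}=e^{(p(k)+p(\kappa))q(x)/2}r(x)$ equals $G(m,x)$ whenever $p(m)=\tfrac12(p(k)+p(\kappa))$, and for continuous $p$ such an $m$ in $[\alpha,\beta]$ is furnished by the intermediate value theorem. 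The two spans therefore coincide and $T$ is a factor space of $G$ in the sense of Definition \ref{def-3.5}.

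I expect this last step to be the mildly delicate one: showing not merely that every $G(k,\cdot)$ is a product $T\bar T$, but that every product $T(k,\cdot)\overline{T(\kappa,\cdot)}$ is (a scalar multiple of) some $G(m,\cdot)$, so that the two spans genuinely agree rather than one containing the other. This is where continuity of $p$ and positivity/reality of $r$ quietly enter, hypotheses that the surrounding discussion carries tacitly; everything else reduces to the one-variable ODE $\partial_k\log G=\tilde p(k)q(x)$ and its exponential solution.
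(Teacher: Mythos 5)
Your argument is correct and is exactly the expansion of what the paper leaves implicit: the paper offers no proof at all, asserting only that the theorem is ``a direct consequence of Definition \ref{def-6}'', and integrating the separable logarithmic derivative $\partial_k \log G = p(k)q(x)$ in $k$ and exponentiating is precisely that consequence, with the converse and the factor-space verification being the routine computations you give. The one caveat is your tacit reality/positivity assumption in the factor-space step: for genuinely complex families such as $G(k,x)=e^{ikx}$ one has $T(k,x)\overline{T(k,x)}=1\neq G(k,x)$, so the span identity must come from the off-diagonal products $T(k,\cdot)\overline{T(\kappa,\cdot)}$ rather than the diagonal ones --- a case your intermediate-value argument essentially already covers once $p(m)=\tfrac12\bigl(p(k)+\overline{p(\kappa)}\bigr)$ is solved for $m$.
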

For example, the family $x^k = \exp(k \log(x))$ is of this 
exponential type. The family $k^x = \exp(\log(k) x)$ is also 
of this type.

When $k$ takes only on discrete values, say integers, the differential 
form (\ref{eqn3.13}) for $\mu$ can be replaced by a finite difference 
for certain classes of functions, among them are polynomials and modified 
Bessel functions:  
\begin{eqnarray}
  \mu(x) &=& \frac{x^{n+1} - x^n}{1 \cdot x^n} = x-1 \label{eqn3.14} \\
  \mu(x) &=& \frac{I_{n+1}(x) - I_{n-1}(x)}{2 \cdot I_n(x)} = -n/x   
             \label{eqn3.15}
\end{eqnarray}
where $x-1$ can be normalized to $x$, and the separable 
$\mu(x,n)=-n/x$ to $1/x$.

For certain applications $k \in [\alpha, \beta]$ is restricted on a 
uniform grid with step size $h$, so that only a finite number of the 
family members $G(\alpha+jh,x)$, $j=0:n$, $nh=(\beta-\alpha)$, are to 
be integrated. For example,
\begin{eqnarray}
  \mu(x) &=& x^h, \quad \mbox{for power functions}    \label{eqn3.16} \\
  \mu(x) &=& e^{ihx}, \quad \mbox{for exponentials}   \label{eqn3.17} \\
  \mu(x) &=& \cos(x), \quad \mbox{for trigonometrics} \label{eqn3.18}
\end{eqnarray}
are appropriate minimal functions. 
\begin{remark}  \label{rmk-3.2}
The minimal function introduced in Definition \ref{def-6} is asscociated 
with differentiation with repect to $k$. In general, $\partial_k$ in 
(\ref{eqn3.13a}) is replaced by a map $L_k$ which operates on $G(k,x)$
as a family of functions of $k$. For example, if Sturm-Liouville equation
$(-L_x+k^2)u(x)=0$ has a solution of the form $u=G(k,x)=G(kx)$, then 
interchanging the roles of $k$ and $x$ we have $L_kG(kx) = x^2 G(kx)$.
In other words, the family of functions $G(k,x)=G(kx)$ has a minimal
function $x^2$ with respect to the operator $L_k$.
\end{remark}

\subsection{Fold data into Gramians - signal processing} 
\label{sec-3.4}

In quadrature design for numerical integration, the weight function 
$u$ is usually prescribed. For other applications, such as optimal 
design or inverse problems, $u$ is either a variable or not given 
explicitly. 

When $u$ is not given and the exact integrals $s(k)$ of (\ref{eqn1a}) 
is the only available data, the first step in quadrature design for 
$G(k,x)$ is to process the signal $s$ to construct the Gramians $A$ 
and $B$.
 
In this section, we will describe the signal processing operations 
for converting Type-1 quadrature for $G$ to Type-2 for the Gramians.
This signal processing is not required to contruct the Gramians if 
$u$ is available. 

Let $G(k,x)$ have a factor space $T(k,x)$, $k\in [\alpha_1, \beta_1]$ 
and a minimal function $\mu$ so that 
\begin{equation}\label{eqn3.20}
   \mbox{\bf span}\{G(k,x), \; \alpha \leq k \leq \beta \}  
 =  \mbox{\bf span}\{T(k,x) \bar{T}(k',x), \; 
         \alpha_1 \leq k,k' \leq \beta_1 \}
\end{equation}
and 
\begin{equation}\label{eqn1b}
    s'(k) = \int_a^b \mu(x) G(k,x) u(x) dx
\end{equation}
with the latter obtained by differentiating (\ref{eqn1a}). By 
(\ref{eqn3.20}), there exists linear combination coefficients 
$F(k,k',\kappa)$ to reproduce $T(k,x) \bar{T}(k',x)$ as a linear
combination of $G(k,x)$:
\begin{equation}\label{eqn3.21}
    T(k,x) \bar{T}(k',x) = \int_\alpha^\beta F(k,k',\kappa) 
       G(\kappa,x) d\kappa
\end{equation}
Integrating (\ref{eqn3.21}) with respect to $x$ against $\mu(x) u(x)$
we rewrite the result and (\ref{eqn1b}) in matrix form
\begin{eqnarray}
  s'(k) &=& G(k,x) \mu(x)u(x)  \label{eqn3.22} \\
  T(k,x) \cdot \mu(x) T(x,k') &=& F(k,k',\kappa) G(\kappa,x)
      \mu(x)u(x) = F(k,k',\kappa) s'(\kappa) \label{eqn3.23}
\end{eqnarray}
The operator $T(k,x) \cdot \mu(x) T(x,k')$, by (\ref{eqn3.44}), is 
Gramian matrix $A(k,k')$, and (\ref{eqn3.23}) shows that the 
derivative of the signal is required to construct $A$, and that
how the vector $s'$ is packed into $A$ by the folding operator $F$. 
The other Gramian matrix $B$ is also constructed by the same 
folding process performed on the signal $s$
\begin{eqnarray}
  A(k,k') &=&  F(k,k',\kappa) s'(\kappa) \label{eqn3.24} \\
  B(k,k') &=&  F(k,k',\kappa) s (\kappa) \label{eqn3.25}
\end{eqnarray}
As an example, let $G(k,x)$ be exponentials defined by (\ref{eqn3.11}), 
which has a factor space 
\begin{equation}\label{eqn3.26}
  T(k,x)=\{\; \exp(ikx), \quad x \in [-\pi, \pi], 
                         \quad k \in [-\beta/2, \beta/2] \;\}
\end{equation}
By (\ref{eqn3.21}), the folding operator is
\begin{equation}\label{eqn3.27}
  F(k,k',\kappa)=\delta(k-k'-\kappa), \quad k,k' \in 
     [-\beta/2, \beta/2], \quad \kappa \in [-\beta, \beta] \;\}
\end{equation}
so that for a fixed $\kappa$, the kernel $F(k,k',\kappa)$ is zero 
everywhere except on the diagonal $k-k'=\kappa$; the Gramians $A$ 
and $B$ of (\ref{eqn3.24}) and (\ref{eqn3.25}) are Toeplitz matrices 
with $s'(\kappa)$ and $s(\kappa)$ on the diagonal $k-k'=\kappa$. 

Folding a data vector, or signal, into a matrix or matrices and 
subsequently processing them is inherently a data analysis 
procedure. When $G(k,x)$ and its factor space $T(k,x)$ share the 
same minimal function $\mu$, the Gramian matrices $A$ and $B$ are 
of the form
\begin{eqnarray}
  B(k,k') &=&  T(k,x) \cdot T(x,k') \label{eqn3.36} \\
  A(k,k') &=&  T(k,x) \cdot \mu(x) T(x,k')  \\
          &=&  [\partial_k T(k,x)] \cdot T(x,k') 
           = \partial_k B(k,k') \label{eqn3.37}
\end{eqnarray}
\begin{theorem}  \label{thrm-11}
Suppose that $G(k,x)$ and its factor space $T(k,x)$ share the same 
minimal function $\mu$, and that the weight function $u$ of
(\ref{eqn2.1}) is nonzero almost everywhere on $[a,b]$. Then the 
quotient matrix $Q=AB^{-1}$ is the differential operator, with 
respect to $k$, restricted on the subspace $T(k,x)$.
\end{theorem}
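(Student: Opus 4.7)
The plan is to exploit the two ingredients collected in (\ref{eqn3.36})--(\ref{eqn3.37}) just before the theorem: the shared minimal-function hypothesis turns multiplication by $\mu(x)$ on the $x$-side into $\partial_k$ on the $k$-side of the family $T(k,x)$, and this in turn identifies the kernel of $A$ with the $k$-derivative of the kernel of $B$. The conclusion $Q=AB^{-1}=\partial_k$ on the relevant subspace will then drop out by cancelling $B$ on the right.

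First I would unpack the shared minimal-function hypothesis. Applying Definition \ref{def-6} to $T(k,x)$, with $p(k)\equiv 1$ so that $\mu$ depends only on $x$, gives $\partial_k T(k,x)=\mu(x)\,T(k,x)$ pointwise in $k$ for almost every $x$. Substituting this into the kernel definition (\ref{eqn3.45}) and differentiating under the integral reproduces (\ref{eqn3.37}), namely that the kernel of $A$ equals $\partial_k$ of the kernel of $B$.

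Next I would lift this kernel identity to an operator identity. For any admissible $v(k')$, setting $\phi(k)=(Bv)(k)$ and interchanging $\partial_k$ with the outer integral once more yields $\partial_k\phi=Av$, so $A=\partial_k\circ B$ as operators on functions of $k$. Composing on the right by $B^{-1}$ then produces $Q=\partial_k$ on the range of $B$; and since, for each fixed $k'$, the function $B(\cdot,k')$ is a continuous superposition of the functions $k\mapsto T(k,x)$ with density $u(x)\overline{T(k',x)}$, this range is precisely the $k$-side realization of the subspace $T(k,x)$ of Definition \ref{def-3.5}. The eigenpairs $\mu(x_j),\,T(\cdot,x_j)$ produced by Theorem \ref{thrm-3.3} then reappear as the standard spectral picture for $\partial_k$ restricted to this subspace, which closes the identification.

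The role of the hypothesis $u\neq 0$ almost everywhere is to rule out any degeneracy in this cancellation: it guarantees that $B$ is injective on the subspace in question, so that $B^{-1}$ (or, more carefully, the pseudoinverse $B^{+}$ on the range of $B$) is unambiguous and the step $(\partial_k\circ B)\circ B^{-1}=\partial_k$ is legitimate. I expect the main technical obstacle to be exactly this analytic point --- making ``$B^{-1}$'' precise in the continuous-parameter operator setting, and justifying the differentiation under the integral at both stages --- since the underlying algebraic content, once (\ref{eqn3.37}) is granted, reduces to a one-line cancellation.
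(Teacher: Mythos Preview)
Your proposal is correct and follows exactly the route the paper takes: the paper does not give a separate proof block for this theorem but simply records the kernel identity $A(k,k')=\partial_k B(k,k')$ in (\ref{eqn3.36})--(\ref{eqn3.37}) immediately before the statement and treats the theorem as its direct consequence, with the remark afterward about $u$ vanishing on a set of positive measure playing the same role as your discussion of why $u\neq 0$ a.e.\ is needed. Your write-up is more careful about the analytic justification (differentiation under the integral, the meaning of $B^{-1}$), but the underlying argument is the same one-line cancellation $AB^{-1}=(\partial_k B)B^{-1}=\partial_k$ on the span of $T(\cdot,x)$.
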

If $u$ vanishes on a subset of $[a,b]$ of positive measure, the 
quotient matrix $Q$ will still be a differential operator 
restricted on the range space of $B$, which is a subspace of 
$T(k,x)$.

\subsection{Regularization} \label{sec-3.5}

Once the two Gramians are constructed, there are two issues with 
computing the quotient matrix $Q = AB^{-1}$ (i) Inverting the 
compact operator $B$ (ii) For a prescribed precision $\epsilon>0$, 
replace $Q$ by a finite, $n$-by-$n$ square matrix for subsequent eigen 
decomposition. The two issues can be tackled together by 
regularization of $A, B$: Approximate $A, B$ with finite rank 
operators $A_n, B_n$. 

Ideally, we should find a function $s_n(k)$ to approximate the 
data $s(k)$ in a least squares sense to the prescribed precision 
which when packed by (\ref{eqn3.25}) gives rise to $B_n$ of rank 
$n$. Solving such a nonlinear least squares problem is not known 
to be tractable in cost or convergence, so suboptimal schemes are 
sought instead. One of them requires SVD on $B$ with $\epsilon$
as the cut off precision to construct a rank $n$ best approximation 
to $B$, so that 
\begin{eqnarray}
  B(k,k') &\approx& U(k,n) \Sigma(n,n) V(n,k') \label{eqn3.28} \\
  A(k,k') &\approx& U(k,n) S(n,n)      V(n,k') \label{eqn3.29} \\
  Q(k,k') &\approx& U(k,n) S(n,n)\Sigma^{-1}(n,n) U(n,k')  \label{eqn3.30}
\end{eqnarray}
namely, both $B$ and $A$ are projected on the $n$ dimensional
column (or range) subspace spanned by $U(k,n)$ and row (or domain)
space spanned by $V(k,n)$. Note that while $\Sigma$ is diagonal,
$S$ is generally not. Finally, by Theorem \ref{thrm-5}, a $n$-term
quadrature of finite precision proportional to the prescribed can
be attempted by solving the eigenvalue problem for the projected 
version of $Q$ 
\begin{equation}\label{eqn3.31}
  \tilde{Q}(n,n) = S(n,n)\Sigma^{-1}(n,n)
\end{equation}

\subsection{Type-3 quadratures for integral equations} \label{sec-3.6}

Let $A$, $B$ be the $m$-by-$n$ Gramian matrices of Theorem 
\ref{thrm-3.3}. Let the rank of $B$ be $r$. A $r$-term Type-3 
quadrature uses the nodes $\{ x_j, j=1\!:\!r \}$ and weights 
$W=\{w_{ij} \; i=1\!:\!m, j=1\!:\!r \}$ to integrate $A$ and $B$ 
\begin{eqnarray}
  A(m,n) &=& W(m,j) \mu(x_j) S(x_j,n) \label{eqn3.47a} \\
  B(m,n) &=& W(m,j) S(x_j,n)          \label{eqn3.47b}
\end{eqnarray}
In other words, the $m$ functions $T(m,x)$ of (\ref{eqn3.44}) are
regarded as the weight functions for the Type-3 quadrature. 
\begin{theorem}  \label{thrm-3.5}
If there is a $r$-term quadrature (\ref{eqn3.47a}), (\ref{eqn3.47b}), 
then the $m$-by-$m$ matrix $A B^+$ has $r$ eigenvalues and 
corresponding eigenvectors of the form 
\begin{eqnarray}
  \lambda_j(A B^+) &=& \mu(x_j), \quad j=1\!:\!r \label{eqn3.47c} \\
  v_j(A B^+) &\propto& W(m,j),   \quad j=1\!:\!r \label{eqn3.48c}
\end{eqnarray}
The remaining $m-r$ eigenvalues are zero.
\end{theorem}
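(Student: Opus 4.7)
The plan is to mirror the argument used for Theorem \ref{thrm-3.3}, with the rank-$r$ matrix $W(m,r)$ playing the role that $T_r\,\mbox{\bf diag}\{w_j\}$ played there. I would first observe that writing the quadrature in matrix form gives
\begin{equation*}
  B = W\, S_r, \qquad A = W\, \mbox{\bf diag}\{\mu(x_j)\}\, S_r,
\end{equation*}
where $W$ is the $m$-by-$r$ weight matrix and $S_r = S(\{x_j\},n)$ is $r$-by-$n$. Since $B$ has rank $r$ and the product $W S_r$ equals $B$, both factors are forced to be full rank $r$: $W$ has full column rank and $S_r$ has full row rank. In particular, $W^+ W = I_r$ and $S_r S_r^+ = I_r$.

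Next, I would invoke the standard pseudoinverse factorization valid under these full-rank conditions,
\begin{equation*}
  B^+ = (W S_r)^+ = S_r^+ W^+,
\end{equation*}
and then compute
\begin{equation*}
  A B^+ = W\, \mbox{\bf diag}\{\mu(x_j)\}\, S_r\, S_r^+\, W^+ = W\, \mbox{\bf diag}\{\mu(x_j)\}\, W^+.
\end{equation*}
From this closed form, (\ref{eqn3.47c}) and (\ref{eqn3.48c}) are immediate: multiplying on the right by the $j$-th column of $W$ and using $W^+ W = I_r$ picks out $\mu(x_j)$ times that column, so the columns of $W$ are the position-type eigenvectors and the $\mu(x_j)$ are the associated eigenvalues.

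Finally, because $A B^+$ factors through an $r$-dimensional middle space, it has rank at most $r$, which forces the remaining $m-r$ eigenvalues to vanish. The only genuinely delicate point is the pseudoinverse identity $(WS_r)^+ = S_r^+ W^+$; once the full-rank dimensions of $W$ and $S_r$ are pinned down from $\mbox{\bf rank}(B)=r$, the rest is bookkeeping parallel to the proof of Theorem \ref{thrm-3.3}, and no new ideas beyond that theorem are needed.
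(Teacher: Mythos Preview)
Your argument is correct and is exactly what the paper intends: the proof is declared ``nearly identical to that of Theorem \ref{thrm-3.3}'' and omitted, and you have carried out precisely that parallel, with $W$ in place of $T_r\,\mbox{\bf diag}\{w_j\}$ and the same full-rank/pseudoinverse reasoning. Nothing is missing.
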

The proof is nearly identical to that of Theorem \ref{thrm-3.3}, and 
is omitted. Let 
\begin{equation}\label{eqn1c}
    v(y) = \int_a^b G(y,x) u(x) dx, \quad y \in [a, b]
\end{equation}
be an integral equation for $u$ on $[a, b]$. Let $\{ y_i, i=1\!:\!m \}$
be $m$ points in $[a, b]$. Let $T(m,x)=\{ G(y_i,x), i=1\!:\!m \}$. 
Finally, let $u \in S(x,n)$, namely $u$ is in the span of the $n$ 
functions $S$. Then $W$ of Theorem \ref{thrm-3.5} is a discretization 
of the integral equation  
\begin{equation}\label{eqn1d}
    v(y_i) \approx \sum_{j=1}^r W_{ij} u(x_j) 
\end{equation}
which is precise for $u \in S(x,n)$.

\section{Examples} \label{sec-4}

In this section we present several examples to illustrate our 
quadrature design methods. In \S \ref{sec-4.1} we construct 
quadratures for non-positive definite weight $u$. \S \ref{sec-4.2} 
and \S \ref{sec-4.3} construct quadratures for power and exponential
functions. 


\subsection{Quadratures for non-positive definite weights} 
\label{sec-4.1}

Gaussian quadratures may not exist for non-positive definite weights.
As an example, we consider n-term Gaussian quadratures to integrate 
polynomials of degree less than $2n$, against the weight function
\begin{equation}\label{eqn3.14c}
  u(x)=\sin(3\pi x)
\end{equation}
in $[-1, 1]$. The oddness of $u$ and the optimality of Gaussian
quadrature preclude Gaussian quadratures of odd $n$, otherwise $x=0$ 
must be a quadrature node where $u$ vanishes which makes the node
useless. Not all even $n$ values support a Gaussian quadrature. 
For the weight given by (\ref{eqn3.14c}), there is a Gaussian quadrature
for $n=16$, and $n=18$, but not for $n=14$. Whenever there is a Gaussian 
quadrature, it can be constructed by Theorem \ref{thrm-1}. Figure \ref{fig-4.1}
shows the locations of the quadrature nodes in $[-1,1]$, and the 
quadrature weights. The weights are negative wherever $u$ is negative. 
\begin{figure}[h]
\epsfig{file=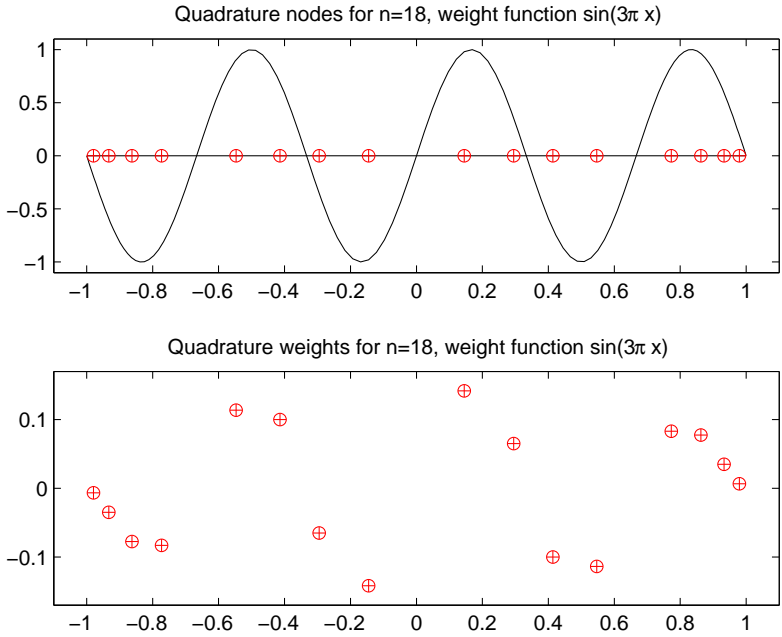, width=4.9in}
\caption{Quadrature nodes and weights for $u(x)=\sin(3\pi x)$}
\label{fig-4.1}
\end{figure}

\subsection{Power functions, Hankel Gramians} \label{sec-4.2}

To integrate the power functions
\begin{equation}\label{eqn3.10a}
  G(k,x)=\{\; x^k, \quad x \in [0,1], \quad k \in [\alpha, \beta] \;\}
\end{equation}
against a weight $u$ with a $n$ term quadrature, we follow \S 
\ref{sec-3.4} to construct Gramian $B$ from the exact integrals 
$s(k)$, and Gramian $A$ from $s'(k)$.

By (\ref{eqn3.13}), the minimal function is $\mu(x)=\log(x)$. The
power functions obey the product law of Definition \ref{def-3.5}, with
\begin{equation}\label{eqn3.10b}
  T(k,x)=\{\; x^k, \quad x \in [0,1], \quad k \in [\alpha/2, \beta/2] \;\}
\end{equation}
By (\ref{eqn3.21}), the folding kernel $F$, cf (\ref{eqn3.27}), is 
\begin{equation}\label{eqn3.27a}
  F(k,k',\kappa)=\delta(k+k'-\kappa), \quad k,k' \in 
     [\alpha/2, \beta/2], \quad \kappa \in [\alpha, \beta] \;\}
\end{equation}
Therefore, the Gramians $A$ and $B$ are Hankel matrices with 
$s'(\kappa)$ and $s(\kappa)$ on their anti-diagonals $k+k'=\kappa$.

For a numerical experiment, we construct a Gaussian quadrature for 
$G(k,x)=x^k$, $x \in [a,b]=[0,1]$, $k \in [\alpha, \beta]=[-1/3, 1/2]$ 
by constructing an inner product quadrature for the factor space 
$T(k,x)= k^x$, $x \in [a,b]=[-3,3]$, $k \in [\alpha/2, \beta/2]
=[-1/6, 1/4]$. Following the procedures of \S \ref{sec-3.5}, a $n$ term 
quadrature, though not precise to integrate all functions in $G(k,x)$,
was constructed from the n-by-n Gramians $A$ and $B$ of (\ref{eqn3.28}.
For a cut off precision $\epsilon=10^{-12}$, $n=9$. Figure \ref{fig-4.2}
shows the locations of nodes in $[-3,3]$, and the relative error of the
quadrature as a function of $k \in [1/16, 4]$. 
\begin{figure}[h]
\epsfig{file=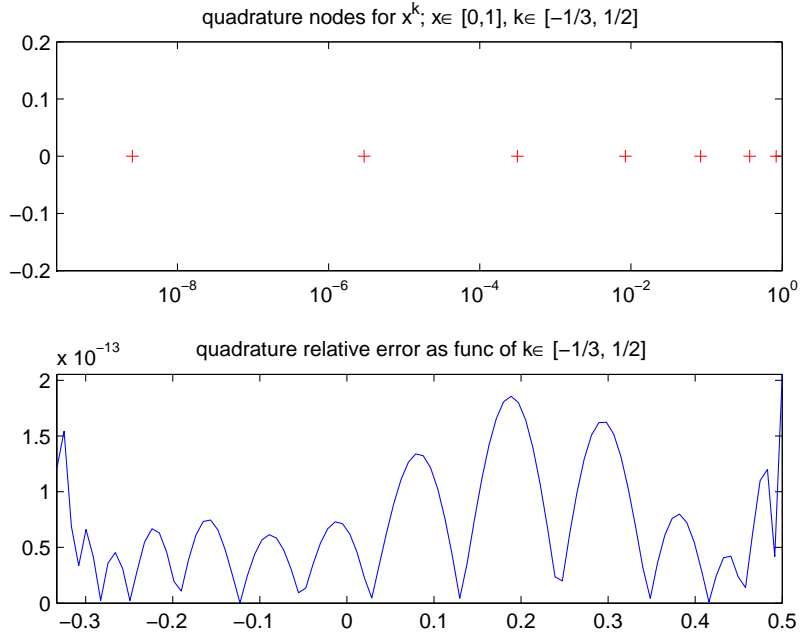, width=4.9in}
\caption{Quadrature nodes and relative error for $G(k,x)=x^k$}
\label{fig-4.2}
\end{figure}

\subsection{Exponentials $k^x$, hyperbolic Gramians} \label{sec-4.3}

This subsection is analogous to the preceding one; therefore, we
will only provide the essentials. The family of exponential functions
\begin{equation}\label{eqn3.10c}
  G(k,x)=\{\; k^x, \quad x \in [a,b], \quad k \in [\alpha, \beta]
           \;\}, \quad \alpha>0
\end{equation}
is not equivalent to $\exp(kx)$. The minimal function is dependent on
$k$ but the dependence is separable 
\begin{equation}\label{eqn3.10d}
  \mu(x,k) = x/k := \mu(x)/k
\end{equation}
The factor space
\begin{equation}\label{eqn3.10e}
  T(k,x)=\{\; k^x, \quad x \in [a,b], \quad k \in 
         [\sqrt{\alpha}, \sqrt{\beta}] \;\}
\end{equation}
gives rise to the folding kernel
\begin{equation}\label{eqn3.27b}
  F(k,k',\kappa)=\delta(kk'-\kappa), \quad k,k' \in 
     [\sqrt{\alpha}, \sqrt{\beta}], \quad \kappa \in [\alpha, \beta] \;\}
\end{equation}
Therefore, the Gramians $A(k,k')$ and $B(k,k')$ are operators with 
$\kappa s'(\kappa)$ and $s(\kappa)$ on the hyperbolae $kk'=\kappa$. 
For constant weight $u=1$,
\begin{equation}\label{eqn3.27c}
   s(k) = \frac{k^b - k^a}{\ln k}, \quad 
k s'(k) = \frac{bk^b - ak^a}{\ln k} - \frac{s(k)}{\ln k}
\end{equation}
For a numerical experiment, we construct a Gaussian quadrature for 
$G(k,x)=k^x$, $x \in [a,b]=[-3,3]$, $k \in [\alpha, \beta]=[1/16, 4]$ 
by constructing an inner product quadrature for the factor space 
$T(k,x)= k^x$, $x \in [a,b]=[-3,3]$, $k \in [\sqrt{\alpha}, \sqrt{\beta}]
=[1/4, 2]$. For a cut off precision $\epsilon=10^{-12}$, the procedures of 
\S \ref{sec-3.5} gives rise to $n=9$. Figure \ref{fig-4.3} shows the 
locations of nodes in $[-3,3]$, and the relative error of the quadrature 
as a function of $k \in [1/16, 4]$. 
\begin{figure}[h]
\epsfig{file=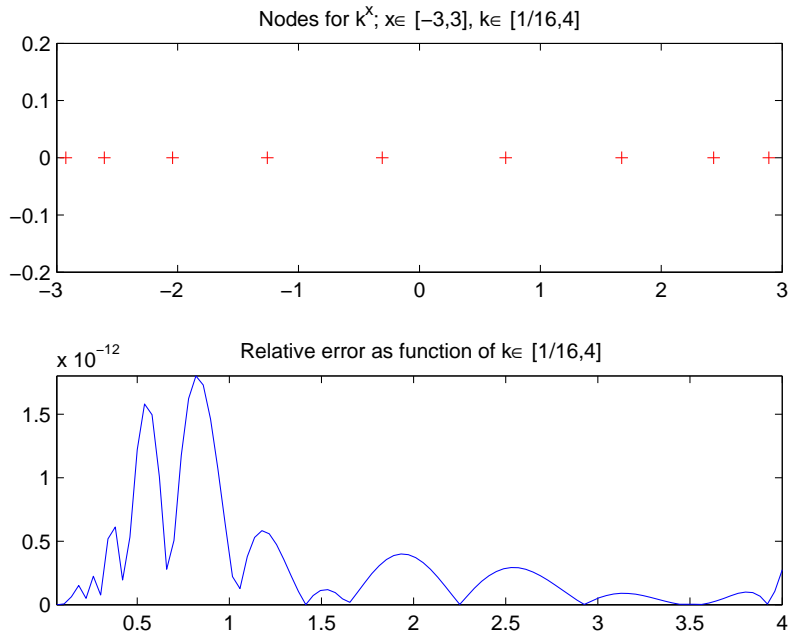, width=4.9in}
\caption{Quadrature nodes and relative error for $G(k,x)=k^x$}
\label{fig-4.3}
\end{figure}
%


 




\section{Generalizations and applications} \label{sec-5}

The algorithms for the inner product quadrature design, presented
in Theorem \ref{thrm-5}, \ref{thrm-5}, and \ref{thrm-3.3}, will also
work for matrix and tensor quadrature weights. Take the two Gramians
$A, B$ of Theorem \ref{thrm-3.3} for example, the product space 
$\Pi(S,T)$ of (\ref{eqn3.4x}) may have a dimension on the order 
$mn$. A quadrature of $r$ nodes, with $r \le m$, will integrate 
these $O(mn)$ distinct functions only if the quadrature weights 
$w$ has off diagonal entries. It may be a banded matrix, or a 
dense matrix with a predetermined diagonals, but as long as the 
$r$-by-$r$ matrix $w$ is invertible, Theorem \ref{thrm-3.3} still 
holds, for its proof is equally valid as the diagonal matrix $w$ 
is replace by an invertible one. 

Tensor ``weights'' refer to one or two $r$-by-$r$ matrix $w$ which 
will entrywise multiply the integrand $T_r$ from the right, or $S_r$ 
from the left, or both, as opposed to standard matrix-matrix 
multiplication. The proof of Theorem \ref{thrm-3.3} holds, as $T_r$ 
and $S_r$ will still be full rank $r$ after the entrywise 
multiplication, otherwise the rank of $B$ will be less than $r$.

Matrix and tensor quadrature weights are related to certain sensing 
and inverse scattering applications; see \S \ref{sec-5.1} for more 
details.

The 1-D results presented in this paper makes a step toward a 
systematic method to design Gaussian quadratures for an arbitrary 
system of functions in one and higher dimensions; see \S 
\ref{sec-5.2} and \ref{sec-5.3} for 2-D extensions.

\subsection{Separation principle of imaging} \label{sec-5.1}

The mathematical models for imaging, with the notable exceptions 
of MRI and X-ray CAT scan due to absence of wave scattering as 
their probing mechanisms, are inconsistent in that their 
formulation is based on reflectivity or scattering coefficient 
of targets as a function of position. But in many applications, 
these functions are not nearly single valued. Amplitude of backward, 
monostatic reflected wave from a small target depends on direction 
unless the target is a ball, for example, with uniform reflection 
coefficient on the sphere. 

There is a remarkable property of Gaussian quadrature design - the 
nodes can be determined \emph{first} and \emph{independently} of 
the weights. This is also valid for a "quadrature" with inconsistent 
``quadrature weights", namely with tensor weights. For imaging or 
inverse scattering with waves, the measurement is typically 
a Gramian matrix known as the scattering matrix. For some $r$
point targets as the scatterers, there is a $r$-term quadrature
to integrate the Gramian matrix, and the quadrature notes fall
on the locations of the point targets, provided that the size of
the Gramian matrix is no less than $r$. Thus, the quadrature 
approach presents an alternative model based on the locations
of targets.

If we construct a quadrature for the Gramian matrix, the locations
of the targets will be determined \emph{first} and \emph{separately} 
from the target's reflectivities, whether or not they are consistent.
If consistent, and if there is no multiple scattering among them 
then the quadrature weights will be the reflectivities; if there is
multiple scattering then the quadrature weights will be a dense matrix
which together with the quadrature nodes will be sufficient to 
recover the consistent reflectivities via solution of a simple 
matrix equation. If the reflectivities are inconsistent, the
quadrature weights will be tensor, and it is possible to assign
an average reflectivity to each point target.

\subsection{Quadratures in higher dimensions}
\label{sec-5.2}

A Gaussian quadrature in two dimensions integrating the bivariate 
polynomials of degree less than $2n$ in a domain $D$, as is well
known, is a summation of $n(n+1)/2$ terms. Such a quadrature 
rarely exits. We will, however, provide a 2-D versions of Theorems 
\ref{thrm-1} and \ref{thrm-2} to construct the quadrature by eigen 
decomposition, and to illustrate what is required of quadrature 
design in higher dimensions. The results will also be useful in 
\S \ref{sec-5.3} for quadrature in two and higher dimensions 
constructed by a technique called deflation.  

Let $T(n,x,y)$ of size $n(n+1)/2$-by-$D$ be the $n(n+1)/2$ basis
functions for polynomials of degree less than $n$ in the domain $D$. 
Let
\begin{eqnarray}
  B &=& T(n,x,y) \cdot T(x,y,n)  \label{eqn2.5b0} \\
  A_x &=& T(n,x,y) \cdot x T(x,y,n)  \label{eqn2.5b} \\
  A_y &=& T(n,x,y) \cdot y T(x,y,n)  \label{eqn2.5c}
\end{eqnarray}
where the dot product is over domain $D$ and with a weight function
$u$. We have
\begin{theorem}  \label{thrm-3}
If there is a $n(n+1)/2$-term quadrature $\{ (x_j,y_j); w_j \}$ to 
integrate the Gramian matrices  $B$, $A_x$, and $A_y$, then 
\begin{eqnarray}
  \lambda_j(A_x B^{-1}) &=& x_j, \quad j=1\!:\!n(n+1)/2 \label{eqn2.6a} \\
  \lambda_j(A_y B^{-1}) &=& y_j, \quad j=1\!:\!n(n+1)/2 \label{eqn2.6b}
\end{eqnarray}
\end{theorem}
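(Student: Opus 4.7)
The plan is to adapt the proof of Theorem \ref{thrm-1} essentially verbatim, with the row dimension $n$ replaced by $N = n(n+1)/2$, and to treat the two matrices $A_x B^{-1}$ and $A_y B^{-1}$ in parallel. As in Theorem \ref{thrm-1}, I will need the implicit assumption that $B$ is invertible; this is automatic when $u$ is positive definite but has to be posited otherwise.

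First, let $T_N = T(N,\{(x_j,y_j)\})$ denote the $N \times N$ matrix whose $j$-th column is the evaluation vector $T(N,x_j,y_j)$, and let $T_N^* = T(\{(x_j,y_j)\},N)$. Since the $N$-term quadrature integrates $B$, $A_x$ and $A_y$ exactly,
\begin{eqnarray}
B   &=& T_N \, \mbox{\bf diag}\{w_j\}     \, T_N^*, \nonumber \\
A_x &=& T_N \, \mbox{\bf diag}\{w_j x_j\} \, T_N^*, \nonumber \\
A_y &=& T_N \, \mbox{\bf diag}\{w_j y_j\} \, T_N^*. \nonumber
\end{eqnarray}
Invertibility of $B$ forces all $w_j \neq 0$ and $T_N$ to be invertible, exactly as in (\ref{eqn2.6m}). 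Multiplying on the right by $B^{-1}$ and cancelling $\mbox{\bf diag}\{w_j\} T_N^*$ against its factors in $B^{-1}$, as in (\ref{eqn2.36}), gives
\begin{equation}
A_x B^{-1} \;=\; T_N \, \mbox{\bf diag}\{x_j\} \, T_N^{-1}, \qquad A_y B^{-1} \;=\; T_N \, \mbox{\bf diag}\{y_j\} \, T_N^{-1}, \nonumber
\end{equation}
from which (\ref{eqn2.6a}) and (\ref{eqn2.6b}) are read off immediately, with the common eigenvectors $T(N,x_j,y_j)$.

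The main subtlety is structural rather than computational: the argument silently requires $A_x B^{-1}$ and $A_y B^{-1}$ to be simultaneously diagonalizable, and hence to commute. In contrast to the 1D Theorem \ref{thrm-1}, where the eigenvalues alone carry the quadrature nodes, here the node $(x_j,y_j)$ is split between the spectra of two distinct matrices and coordinated only through the shared position eigenvector $T(N,x_j,y_j)$ in the sense of Definition \ref{def-3.7}. This compatibility is a genuine necessary condition for existence of the quadrature, and it is the reason the sort of equal-weight 2D Gaussian quadrature rarely exists, as noted in the paragraph preceding the theorem.
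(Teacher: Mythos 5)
Your proof is correct and follows exactly the route the paper intends: the paper gives no separate argument for this theorem but presents it as the bivariate analogue of Theorem \ref{thrm-1}, whose proof via $B = T_N\,\mbox{\bf diag}\{w_j\}\,T_N^*$ and the cancellation in (\ref{eqn2.36}) you reproduce faithfully for both $A_x$ and $A_y$. Your closing observation that the two quotient matrices are simultaneously diagonalized by $T_N$ (hence commute) is a valid consequence of the computation rather than an unproven hypothesis, and it correctly anticipates condition (ii) of Theorem \ref{thrm-4}.
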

Here the weight $u$ is not assumed positive. This result will be useful 
in \S \ref{sec-5.2} for deflating the Gramians.
\begin{theorem}  \label{thrm-4}
Let the weight function $u$ be positive definite, and let $v_j$ 
denote the j-th eigenvector of a matrix. The three conditions are 
equivalent \\
(i)
There is a $n(n+1)/2$-term quadrature $\{ (x_j,y_j); w_j \}$ to 
integrate $B$, $A_x$, and $A_y$.  \\
(ii) The two quotient matrices share common eigen space, and
\begin{eqnarray}
  \lambda_j(A_x B^{-1}) &=& x_j, \quad j=1\!:\!n(n+1)/2 \label{eqn2.8a} \\
  \lambda_j(A_y B^{-1}) &=& y_j, \quad j=1\!:\!n(n+1)/2 \label{eqn2.8b} \\
  v_j(A_x B^{-1}) &=& v_j(A_y B^{-1}) = T(n,x_j,y_j),  
       \quad j=1\!:\!n(n+1)/2  \label{eqn2.9a}
\end{eqnarray}
(iii) The $n+1$ orthogonal polynomials of degree $n$ have $n(n+1)/2$ 
real, pairwise distinct, common zeros $\{ (x_j,y_j), \; j=1\!:\!n(n+1)/2 \}$.
\end{theorem}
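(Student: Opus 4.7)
The plan is to reduce the theorem to two applications of the 1-D framework established earlier (Theorems \ref{thrm-3}, \ref{thrm-2}, and \ref{thrm-6}): $A_x$ and $A_y$ are the Gramian $A$ of (\ref{eqn2.5a}) built with the multipliers $\mu(x,y)=x$ and $\mu(x,y)=y$ respectively, and what distinguishes the 2-D problem is that a \emph{single} quadrature is required to integrate both Gramians simultaneously.

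For (i) $\Rightarrow$ (ii), I pick an orthonormal basis for $T(n,x,y)$, which the positive definiteness of $u$ permits, so that $B=I$ and the quotient matrices $A_x B^{-1}=A_x$, $A_y B^{-1}=A_y$ are both Hermitian. Applying the proof of Theorem \ref{thrm-3} once with $\mu=x$ and once with $\mu=y$, using the same nodes $(x_j,y_j)$, shows that $T(n,x_j,y_j)$ is simultaneously a right eigenvector of $A_x$ with eigenvalue $x_j$ and of $A_y$ with eigenvalue $y_j$; this is the common eigenspace claim. For (ii) $\Rightarrow$ (i), the pairs $(x_j,y_j)$ are distinct: if two nodes differ in their $x$-coordinate the corresponding $A_x$-eigenvectors are orthogonal by Hermiticity, and if they share $x$-coordinates they must differ in $y$, so the $A_y$-eigenvectors are orthogonal. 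Hence $T(n,\{(x_j,y_j)\})$ is an invertible $n(n+1)/2$-by-$n(n+1)/2$ matrix. Solving the linear system (\ref{eqn2.7}) produces weights $w_j$ making the quadrature exact for $B$, and the joint diagonalization, mirroring (\ref{eqn2.36}), yields exactness for $A_x$ and $A_y$ as well.

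The equivalence (ii) $\Leftrightarrow$ (iii) is the content step. Repeating the chain (\ref{eqn2.8h}) in the proof of Theorem \ref{thrm-6} once for $x$ and once for $y$ recasts the joint eigenvector relation as
\begin{equation}
  \{(I-P_n)[xT(n,x,y)]\}(x_j,y_j) = \{(I-P_n)[yT(n,x,y)]\}(x_j,y_j) = 0.
\end{equation}
Each entry of $(I-P_n)[xT]$ and $(I-P_n)[yT]$ is a polynomial of total degree at most $n$ that is orthogonal to $\Pi_n$ (polynomials of degree $<n$), hence lies in the $(n+1)$-dimensional span of the $n+1$ degree-$n$ orthogonal polynomials. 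I would then verify that multiplying a basis of $\Pi_n$ by $x$ and by $y$ already reaches every degree-$n$ monomial modulo $\Pi_n$, so that the $n(n+1)$ projected entries \emph{span} the entire $(n+1)$-dimensional orthogonal-polynomial subspace; vanishing of the projected entries at $(x_j,y_j)$ is then equivalent to vanishing of every degree-$n$ orthogonal polynomial at those points, which is (iii).

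The main obstacle is this last spanning step together with the reality and pairwise distinctness part of (iii). Joint common zeros of $n+1$ bivariate polynomials are a strong algebraic constraint, and one must argue that the common-eigenvector structure of (ii) forces exactly $n(n+1)/2$ real, pairwise distinct zeros rather than complex or coincident ones. I expect this to follow from the Hermiticity of $A_x$ and $A_y$ (real eigenvalues), the distinctness built into the pairs $(x_j,y_j)$, and a dimension count comparing the joint eigenspace to $n(n+1)/2$, but making the algebraic geometry of the common zero set airtight is where the bulk of the work lies.
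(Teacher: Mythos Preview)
Your proposal is correct and follows essentially the same route as the paper: for (i)$\Leftrightarrow$(ii) the paper simply says ``similar to that of Theorem~\ref{thrm-2}'' (orthonormalize so the quotients are Hermitian, then invoke the proofs of Theorems~\ref{thrm-1} and~\ref{thrm-0.1}), and for (ii)$\Leftrightarrow$(iii) it says ``see the proof of Theorem~\ref{thrm-6}'' (the chain~(\ref{eqn2.8h}) applied with $\mu=x$ and $\mu=y$). You have in fact supplied more detail than the paper does, and the obstacles you flag---the spanning of the $(n+1)$-dimensional degree-$n$ orthogonal space by the projected entries, and the reality/distinctness of the common zeros forced by Hermiticity---are exactly the points the paper leaves implicit.
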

The proof of equivalency of (i) and (ii) is similar to that of Theorem
\ref{thrm-2}. For (iii), see the proof of Theorem \ref{thrm-6}.

\subsection{Deflation for 2-D quadrature design}
\label{sec-5.3}

A node of a 2-D quadrature provides 3 parameters $\{ (x_j,y_j); 
w_j \}$. Denote by $P^{(2)}_n$ the linear space of bivariate 
polynomials of degree less than $n$. Therefore,
\begin{equation}\label{eqn4.10f}
   \mbox{\bf dim}(P^{(2)}_n) = n(n+1)/2, \quad \mbox{and} \quad
   \mbox{\bf dim}(P^{(2)}_{2n}) = n(2n+1)
\end{equation}
A quadrature integrating $P^{(2)}_{2n}$ generally requires no less 
than a third as many nodes as the dimension, namely $n(2n+1)/3$ 
nodes.

A classical Gaussian quadrature for bivariate polynomials, if exists, 
can be constructed by Theorem \ref{thrm-3}, using $n(n+1)/2$ nodes to 
integrate $P^{(2)}_{2n}$; therefore, the quadrature problem is 
over-determined and rarely has a solution. The algorithm of Theorem 
\ref{thrm-3} is rarely useful. But it can be modified and made 
useful by deflating the Gramians $A$ and $B$ iteratively. 

The eigen decomposition of Theorem \ref{thrm-3} can only provide 
$n(n+1)/2$ nodes. Additional nodes will be determined by other 
mechanisms. The number of these nodes is 
\begin{equation}\label{eqn4.10c}
   dN = n(2n+1)/3 - n(n+1)/2 = n(n-1)/6
\end{equation}
which is about a third of $n(n+1)/2$, namely a third of what can 
be provided by the eigen decomposition. In 3-D, the ratio is 1; 
as many additional nodes are requires as those by the eigen 
decomposition. Deflation is a method to provide the additional 
nodes iteratively. The following description takes bivariate 
polynomials in a triangle as example to illustrate the method.
\begin{enumerate}
 \item Suppose that a total of 40 nodes are required to integrate
 polynomials of degree less than $2n$ for some $n$. Suppose that
 the size of Gramians is 30, so eigen decomposition can only provide
 30 nodes. Additional 10 nodes will be supplied by an iterative 
 procedure. 
 
 \item Suppose we are given the precise locations of 10 out of the 
 40 nodes and the corresponding weights $w_j$. Each node $z_j = 
 (x_j, y_j)$, $j=1\!:\!10$, gives rise to a rank one matrix 
 $T(n,z_j) w_j T(z_j,n)$;
 see Theorem \ref{thrm-3} for notation. Deflation involves three
 steps (i) Remove these 10 matrices from Gramian $B$ (ii) Remove the 
 10 rank one matrices $T(n,z_j) w_j x_j T(z_j,n)$ from Gramian 
 $A_x$ (iii) Remove the 10 rank one matrices $T(n,z_j) w_j y_j 
 T(z_j,n)$ from Gramian $A_y$.

 \item Theorem \ref{thrm-3b} below states that the eigen decomposition 
 on the quotient matrices $A_x B^{-1}$ and $A_y B^{-1}$ (cf Theorem 
 \ref{thrm-3}) after the deflations will provide the exact locations 
 of the remaining 30 nodes.

 \item Initialization. Choose 10 nodes and weights as initial guess. 
 There are ways to make good initial guess located in a corner 
 of the triangle - the domain of integration. 

 \item Iteration. Eigen decomposition of the deflated quotient 
 matrices to obtain 30 nodes. Discard 20 of them by choosing only 
 10 out of 30 that are farthest from the 10 initial guess, and
 use them as the initial guess for the next iteration.

 \item Convergence. The Coulomb potential $1/r$ decays over distance. 
  Its perturbation due to that of charge location decays faster: $1/r^2$.
  The location errors in the 10 initial guess will have minimal 
  influence on the farthest of the 30 nodes.
\end{enumerate}
Deflation is also useful for constructing (i) Gauss-Radau type 
formula (with an end x=a or b fixed as a quadrature node) in one 
and higher dimensions
(ii) Gauss-Lobatto type formula (with two ends fixed as quadrature 
nodes) in one and higher dimensions. 

Deflation can be used for constructing a Gaussian quadrature in 
a submain and merging it to an existing quadrature as the trapezoidal 
rule in another subdomainsuch - the so-called hybrid rules \cite{alpert2}.
\begin{theorem}  \label{thrm-3b}
Suppose there is a $n(n+1)/2+r$-term quadrature $\{ (x_j,y_j); w_j \}$ 
to integrate the Gramians $B$, $A_x$, and $A_y$ of 
(\ref{eqn2.5b0})-(\ref{eqn2.5c}). For the first $r$ nodes, let the 
deflated Gramians be defined by
\begin{eqnarray}
  \dot{B}  &=& B - \sum_{j=1}^r T(n,x_j,y_j) w_j T(x_j,y_j,n) \label{eqn2.5d} \\
  \dot{A}_x &=& A_x - \sum_{j=1}^r T(n,x_j,y_j) w_j x_j  T(x_j,y_j,n) 
           \label{eqn2.5e} \\
  \dot{A}_y &=& A_y - \sum_{j=1}^r T(n,x_j,y_j) w_j y_j  T(x_j,y_j,n) 
           \label{eqn2.5f}
\end{eqnarray}
then 
\begin{eqnarray}
  \lambda_j(\dot{A}_x \dot{B}^{-1}) &=& x_j, \quad j=1+r\!:\!n(n+1)/2+r 
              \label{eqn2.6j} \\
  \lambda_j(\dot{A}_y \dot{B}^{-1}) &=& y_j, \quad j=1+r\!:\!n(n+1)/2+r 
              \label{eqn2.6k}
\end{eqnarray}
\end{theorem}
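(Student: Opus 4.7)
The plan is to reduce the statement to Theorem \ref{thrm-3} applied to the deflated Gramians and the remaining $n(n+1)/2$ quadrature nodes.

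First I would use the fact that the given $n(n+1)/2+r$-term quadrature is exact for $B$, $A_x$, and $A_y$ to write each Gramian as an explicit sum of rank-one contributions, one per node. Concretely,
\begin{equation*}
  B = \sum_{j=1}^{n(n+1)/2+r} T(n,x_j,y_j)\, w_j\, T(x_j,y_j,n),
\end{equation*}
and analogous expansions hold for $A_x$ and $A_y$ with the extra factors $x_j$ and $y_j$ inserted. Subtracting the first $r$ rank-one pieces (which is exactly what (\ref{eqn2.5d})--(\ref{eqn2.5f}) do) leaves
\begin{equation*}
  \dot{B} = \sum_{j=r+1}^{n(n+1)/2+r} T(n,x_j,y_j)\, w_j\, T(x_j,y_j,n),
\end{equation*}
and likewise for $\dot{A}_x$, $\dot{A}_y$. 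Thus the remaining $n(n+1)/2$ nodes $\{(x_j,y_j),\ j=r+1,\ldots,n(n+1)/2+r\}$ with the same weights $w_j$ form a quadrature that is exact for the deflated Gramians $\dot{B}$, $\dot{A}_x$, $\dot{A}_y$.

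Now I would invoke Theorem \ref{thrm-3} with these three Gramians and the $n(n+1)/2$-term quadrature. Its conclusion is exactly that the eigenvalues of $\dot{A}_x \dot{B}^{-1}$ and $\dot{A}_y \dot{B}^{-1}$ are, respectively, the $x$- and $y$-coordinates of the supporting nodes, which are indexed $j=r+1,\ldots,n(n+1)/2+r$. This gives (\ref{eqn2.6j})--(\ref{eqn2.6k}) directly.

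The only delicate point, and the one I expect to be the main obstacle, is ensuring that $\dot{B}$ is invertible so that the quotient $\dot{A}_x \dot{B}^{-1}$ is well-defined. Theorem \ref{thrm-3} was stated assuming $B$ invertible; the same argument in its proof (via the analogue of (\ref{eqn2.36})) requires here that the $n(n+1)/2$-by-$n(n+1)/2$ matrix $T(n,\{(x_j,y_j)\}_{j=r+1}^{n(n+1)/2+r})$ be invertible and that the weights $w_j$ be nonzero for $j>r$. Nonvanishing of $w_j$ is guaranteed by Definition \ref{def-0.1}; invertibility of the node matrix can be taken as a genericity hypothesis on the remaining nodes (for bivariate polynomials of degree less than $n$ this is the statement that the $n(n+1)/2$ deflated nodes are not contained in an algebraic curve of degree less than $n$, which holds whenever the original quadrature is nondegenerate). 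Under this mild condition, the proof of Theorem \ref{thrm-3} applies verbatim to the deflated triple and the result follows. \qed
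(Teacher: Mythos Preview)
Your proposal is correct and follows essentially the same approach as the paper: both reduce to Theorem~\ref{thrm-3} applied to the deflated Gramians, the paper phrasing this as passing to the deflated weight $\dot{u}(x,y) = u(x,y) - \sum_{j=1}^r w_j \delta(x-x_j,y-y_j)$ while you work directly with the rank-one decomposition. Your explicit discussion of the invertibility of $\dot{B}$ is a welcome addition that the paper leaves implicit.
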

The proof is a direct consequence of Theorem \ref{thrm-3} applied to the
deflated weight function
\begin{equation}
  \dot{u}(x) = u(x) - \sum_{j=1}^r w_j \delta(x-x_j, y-y_j) \label{eqn3.3h}
\end{equation}


\begin{thebibliography}{99}

\bibitem{Redtable} Abramowitz, M., and Stegun, I. (1965),
{\em Handbook of Mathematical Functions,}
Dover, New York.

\bibitem{Beylkin} G. Beylkin and L. Monzon (2002),
{\em On generalized Gaussian quadratures for exponentials and their
applications,}  Applied and Computational Harmonic Analysis,  v. 12,
pp. 332-373

\bibitem{alpert2} B. Alpert (1999),
{\em Hybrid Gauss-Trapezoidal Quadrature Rules,}
SIAM journal on scientific computing, vol. 20, no. 5, pp. 1551



\end{thebibliography}
\end{document}